\def\thm@space@setup{\thm@preskip=\baselineskip \thm@postskip=\thm@preskip}
\def\section{\@startsection{section}{1}\z@{1.25\baselineskip}{1\baselineskip}{\normalfont\scshape\centering}}
\renewcommand\P{\mathbb P}
\newcommand{\PP}{\mathbb P}
\newcommand\CC{{\mathbb C}}
\newcommand\KK{{\mathbb K}}
\newcommand\be{\begin{eqnarray*}}
\newcommand\ee{\end{eqnarray*}}
\newenvironment{state}[1]{\begin{trivlist}\item[]\textbf{#1.}\itshape\enspace\ignorespaces}{\end{trivlist}}
\begin{document}

\title{Counting lines on projective surfaces}

\author{Thomas Bauer}
\address{
Fachbereich Mathematik und Informatik,
Philipps-Universit\"at Marburg,
Hans-Meerwein-Stra\ss e,
35032 Marburg, Germany.
}
\email{tbauer@mathematik.uni-marburg.de}

\author{S{\l}awomir Rams}
\address{
Institute of Mathematics,
Jagiellonian University,
ul. {\L}ojasiewicza 6,
30-348 Krak\'ow,
Poland
}
\email{slawomir.rams@uj.edu.pl}
\thanks{Research partially supported by the National Science Centre, Poland, Opus grant no.~2017/25/B/ST1/00853 (S. Rams)}

\subjclass[2010]{Primary: {14J25};  Secondary {14J70}}

\begin{abstract}
In this note we prove a new bound on the number of lines on a smooth surface of degree $d\ge 3$ in
$\PP^{3}$.
Building on work of Segre, we provide a rigorous justification of an idea of his while at the same time
improving his bound.
Our result gives the lowest known bound for $d \geq 6$, and it is
valid both in characteristic 0 and in positive characteristic $p>d$.
\end{abstract}

\maketitle

\newcommand{\XXd}{X_{d}}
\newcommand{\XXp}{X_{5}}
\newcommand{\Pl}{\Pi}
\newcommand{\reg}{\operatorname{reg}}
\theoremstyle{definition}
\newtheorem{definition}{Definition}[section]
\theoremstyle{plain}
\newtheorem{obs}{Observation}[section]
\theoremstyle{definition}
\newtheorem{rem}[obs]{Remark}
\newtheorem{examp}[obs]{Example}
\theoremstyle{definition}
\newtheorem{defi}[obs]{Definition}
\theoremstyle{plain}
\newtheorem{prop}[obs]{Proposition}
\newtheorem{theo}[obs]{Theorem}
\newtheorem{lemm}[obs]{Lemma}
\newtheorem{mainlemma}[obs]{Main Lemma}
\newtheorem*{conj}{Conjecture}
\newtheorem{claim}[obs]{Claim}
\newtheorem{cor}[obs]{Corollary}
\newtheorem*{conv}{Convention}
\newcommand{\ux}{\underline{x}}
\newcommand{\ud}{\underline{d}}
\newcommand{\ue}{\underline{e}}
\newcommand{\mmS}{{\mathcal S}}
\newcommand{\mmP}{{\mathcal P}}
\newcommand{\mathbbT}{T}
\newcommand{\nlines}{\ell(\XXp)}
\newcommand{\ii}{\operatorname{i}}

\newcommand{\nonlinflec}{{\mathcal D}}
\newcommand{\linflec}{{\mathcal L}}
\newcommand{\flec}{{\mathcal F}}
\section{Introduction}

In this note we establish a new bound on the number of lines that
can lie on a smooth projective surface in $\P^3$ of given degree.

Recently there has been quite some interest in configurations of lines on surfaces in $\PP^{3}(\CC)$
(see e.g. \cite{dis-2015},    \cite{kollar},  \cite{shioda},  \cite{sarti},  \cite{miyaoka}, \cite{benedetti-regularity}).
In particular, the picture of the geometry of line configurations on complex projective quartic surfaces is complete (up to \cite[Conjecture~4.7]{gonzalezalonso}).
The claim that  the maximal number of lines on a smooth quartic is $64$
can be found in  \cite{segre43}, whereas the first correct proof of that fact is given in \cite{RS}.
The paper \cite{dis-2015} contains
a complete classification of smooth complex quartic surfaces with
many lines. Finally, lines on complex quartics with singular points  are  considered in \cite{veniani-phd},  \cite{gonzalezalonso}.

By contrast,  the maximal number of lines on smooth hypersurfaces in $\PP^{3}(\CC)$ of a fixed degree $d \geq 5$  remains unknown
(see \cite{segre43}, \cite{sarti}, \cite{miyaoka}, \cite{eisenbud-harris-3264}). In the case of smooth quintic surfaces the proof of the inequality
$$
\ell(X_5) \leq 127
$$
can be found in the  recent paper \cite{ramsschuett}, whereas (until now) the best bound for smooth complex surfaces of  degree $d \geq 6$ has been the inequality
\begin{equation} \label{eq-segrebound}
\ell(X_d) \leq (d-2)(11d-6)
\end{equation}
that was stated by Segre in  \cite[$\mathsection$ 4]{segre43}.

Segre's proof of \eqref{eq-segrebound} in \cite{segre43} is based on various properties of so-called lines of the second kind.
In particular, he states that every line of the second kind comes up in the flecnodal divisor with multiplicity two (see the next section for details).
Unfortunately, several claims made in \cite{segre43}
are false (\cite[$\mathsection$~3]{rs-advgeom}) and the proof of the claim on the multiplicity of lines of the second kind does not fulfill modern standards of rigor
(see Remark~\ref{rem-segre-wrong}.b).

Our original aim while working on this note was to determine whether Segre's brilliant idea can be proven using modern algebraic geometry (and in particular, whether it is correct).
We were able to show that this is the case, and in fact we improved on his bound.
   Our result addresses the case of characteristic zero as well
   that of positive characteristic.
   Under suitable assumptions on the degree,
   we can use
   a result of Voloch \cite{voloch} to conclude
   that a general point is not flecnodal, even in positive characteristic.
   We show:

\begin{theo} \label{thm-bound-char-p}
   Let $\XXd \subset \PP^{3}(\KK)$ be a smooth surface of degree $d \geq 3$
   over a field of
   characteristic $0$ or of characteristic $p > d$.
   Let $\ell(\XXd)$ be the number of lines that the surface  $\XXd$ contains.
   Then the following inequality holds
   \begin{equation} \label{eq-bound-char-p}
      \ell(\XXd) \leq 11 d^2 - 30d + 18
       \, .
   \end{equation}
\end{theo}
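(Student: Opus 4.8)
The plan is to realize every line of $\XXd$ as a component of the flecnodal divisor and then to count these components, sharpening the crude degree estimate by the exact multiplicity with which each line occurs. Recall that the flecnodal divisor $\flec$ is cut out on $\XXd$ by the classical covariant hypersurface of degree $11d-24$, so that $\flec\in|(11d-24)H|$ and, viewed as a curve in $\PP^{3}$, it has degree $\deg\flec=d(11d-24)=11d^{2}-24d$ (note that this is effective and nonzero precisely for $d\ge 3$). Every line $\ell\subset\XXd$ lies in $\flec$, since $\ell$ itself is an asymptotic tangent of contact order $\ge 4$ at each of its points. Before counting, I must guarantee — and this is the delicate point in characteristic $p$ — that $\flec$ is an honest curve and not the whole of $\XXd$; equivalently, that a general point of $\XXd$ is not flecnodal. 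In characteristic $0$ this is classical, while for $p>d$ I would deduce it from Voloch's theorem \cite{voloch}, which rules out the strange, everywhere-flecnodal behaviour that can occur in small characteristic.

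The improvement comes from computing the multiplicities precisely, and this is where I follow, and make rigorous, Segre's division of lines into two kinds. Fixing $\ell=\{x_{2}=x_{3}=0\}$ and writing $F=a\,x_{2}+b\,x_{3}+(\text{order}\ge 2\text{ in }x_{2},x_{3})$ with $a,b\in\KK[x_{0},x_{1}]_{d-1}$ coprime (smoothness along $\ell$), the tangent planes along $\ell$ form the pencil $[a:b]$, and the residual intersections $(T_{p}\XXd\cap\XXd)-\ell$ distinguish lines of the first kind from those of the second. The technical core is the rigorous form of Segre's claim: \emph{a line of the second kind occurs in $\flec$ with multiplicity at least $2$.} To establish this I would restrict the defining covariant of $\flec$ to a transversal slice of $\ell$, expressing it through the Taylor coefficients of $F$ in the normal directions $x_{2},x_{3}$, and compute its order of vanishing along $\ell$; the defining property of a second-kind line forces the linear term of this restriction to vanish, leaving vanishing order $\ge 2$. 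This is exactly the step Segre treated non-rigorously: the relevant coefficients involve derivatives of order up to $\sim d$, so the hypothesis $p>d$ (or $p=0$) is what keeps the pertinent factorials invertible and prevents the leading terms from collapsing in positive characteristic.

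To turn this into the stated bound I would combine the Main Lemma with the degree of $\flec$. Writing the lines as $\ell_{1},\dots,\ell_{N}$ with $N=\ell(\XXd)$, letting $m_{i}$ be the multiplicity of $\ell_{i}$ in $\flec$, and letting $R$ be the residual (non-line) part, the identity $\deg\flec=\sum_{i}m_{i}+\deg R$ (using $\ell_{i}\cdot H=1$ and $R\cdot H=\deg R$) rearranges to
\[
\ell(\XXd)=\deg\flec-\Big(\sum_{i}(m_{i}-1)+\deg R\Big),
\]
so it remains to bound the excess $\sum_{i}(m_{i}-1)+\deg R$ below by $6(d-3)$. The Main Lemma feeds the first summand (each second-kind line contributes at least $1$), and one analyses the genuine flecnodal curve $R$, which must be present once $d\ge 4$, to supply the rest. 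A reassuring check is the case $d=3$: then $\deg\flec=27$, the required excess is $0$, and on a smooth cubic the flecnodal divisor is exactly its $27$ lines, each simple, with $R=0$, so the bound $\ell(\XXd)\le 27$ is sharp.

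The main obstacle I anticipate is the Main Lemma itself: carrying out the local computation of the flecnodal multiplicity rigorously and uniformly across the two kinds of lines, and controlling the characteristic-$p$ degeneracies — the very gap in Segre's original treatment flagged in Remark~\ref{rem-segre-wrong}. The secondary difficulty is the lower bound on the excess $\sum_{i}(m_{i}-1)+\deg R$: it is not enough to know that second-kind lines have multiplicity $\ge 2$, since a surface could a priori have none of them, so the deficiency $6(d-3)$ must ultimately be extracted from the unavoidable presence of residual flecnodal curve once $d\ge 4$, and making that quantitative is where I expect the remaining work to lie.
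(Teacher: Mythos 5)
Your setup coincides with the paper's: the divisor $\flec(\XXd)\in|{\mathcal O}_{\XXd}(11d-24)|$ of degree $d(11d-24)$, every line of $\XXd$ contained in it, multiplicity at least two for lines of the second kind, Voloch's theorem to handle $p>d$, and the identity $\ell(\XXd)=\deg\flec(\XXd)-\deg{\mathcal Z}$, where your ``excess'' $\sum_i(m_i-1)+\deg R$ is exactly $\deg{\mathcal Z}$ for ${\mathcal Z}=\flec(\XXd)-\sum_j L_j$. But the decisive quantitative step --- the paper's Main Lemma~\ref{mainlemma}, $\deg{\mathcal Z}\ge 6(d-3)$ --- is precisely what you leave open, and the route you sketch for it cannot work. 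You propose to extract the deficiency from ``the unavoidable presence of the residual flecnodal curve $R$ once $d\ge 4$.'' That is false: for the Schur quartic (Example~\ref{example-schur}) one has $\deg\flec=80=48\cdot 1+16\cdot 2$, so $R=0$ and the entire excess comes from the sixteen second-kind lines counted doubly. Hence no lower bound on $\deg R$ alone can yield $6(d-3)$; the bound must be allowed to come from line multiplicities, from the residual curve, or from a mixture, depending on the surface. What the paper actually uses --- and what is absent from your proposal --- is: (i) Segre's Claim~\ref{claim-first-kind} that a line of the first kind meets at most $8d-14$ other lines of $\XXd$; (ii) intersection theory on the surface, with $L^2=-(d-2)$, giving ${\mathcal Z}\cdot L\ge 4(d-3)$ for every line $L$ of multiplicity one in $\flec(\XXd)$; and (iii) a combinatorial case analysis of the planes spanned by such reduced lines ($k$-spanned planes, Lemmas~\ref{lemm-threeplanes}--\ref{lemm-one-line}), which converts these local estimates into the global bound. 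Without ingredients of this kind your excess inequality has no proof, and the theorem does not follow.

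A secondary concern: your plan for the multiplicity statement --- expand ``the defining covariant'' transversally to the line and observe that the linear term vanishes --- is essentially Segre's original argument, which Remark~\ref{rem-segre-wrong}(b) identifies as the gap to be filled: one must know that the function being expanded is a \emph{local equation} of the divisor $\flec(\XXd)$, not merely a function vanishing on its support, before its vanishing order says anything about the multiplicity of the line in the divisor. The paper circumvents exactly this by constructing $\flec(\XXd)$ as the pushforward $\psi_{*}({\mathfrak W}\cdot(\PP^{3}\times H))$ of an explicit intersection cycle and reading off the multiplicity from the degree of $\psi$ over the line (Lemmas~\ref{lemm-non-degenerate} and~\ref{lemm-second-kind}). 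If you genuinely work with the covariant whose restriction cuts out $\flec(\XXd)$ as a divisor, your computation could in principle be made rigorous, but establishing that identification is itself the nontrivial content you would need to supply.
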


This result provides the lowest known bound on the number of lines lying on a degree-$d$ surface for $d \geq 6$.
Still, the question what is the maximal number of lines on smooth projective surfaces of a fixed degree $d \geq 5$ remains open.
Note that the Fermat quartic in characteristic $3$ contains $112$ lines -- this
shows that the assumption $p>d$ cannot be omitted.
(see e.g.~\cite{RS112}).

The first bound on the number of lines on a smooth degree-$d$ surface was stated by Clebsch:
\begin{equation} \label{eq-clebsch-salmon}
\ell(X_d) \leq d(11d-24)
\end{equation}
(\cite[p.~106]{Clebsch}), who used ideas coming from Salmon (\cite[p.~95]{Clebsch}, \cite{salmon}). While a beautiful modern proof of \eqref{eq-clebsch-salmon} is given in \cite[$\mathsection$ 11.2.1]{eisenbud-harris-3264}, we follow a different path to arrive at
\eqref{eq-bound-char-p} (see Remark~\ref{rem-segre-wrong}.b), so the only fact from \cite{eisenbud-harris-3264} we use is the result,
due to McCrory and Shifrin \cite{mcrory-shifrin},
that a general point of the surface $X_d$ is not flecnodal (Lemma~\ref{lemm-two}).
In characteristic zero, a bound on the number of lines on
complete intersections of codimension two or more can be derived from the orbibundle Miyaoka-Yau-Sakai inequality, but this approach yields no results on the codimension-one case  (see \cite[Remark~3 on p.~921]{miyaoka}).

   We do not believe that the particular bound in
   Theorem~\ref{thm-bound-char-p} is sharp. Note, in particular, that it
   would be
   in
   line with the results on quartic surfaces
   \cite{RS,RS112}
   that surfaces might generally be able to
   carry more lines when the characteristic is positive.

\begin{conv} \label{conv}
   In this note we work over an algebraically closed  field $\KK$ of characteristic $p$,
   where either $p=0$ or $p>d$, where $d \ge 3$ is a fixed integer.
\end{conv}


\section{Lines as multiple components of the flecnodal curve}

Let $\XXd \subset \PP^{3}(\KK)$ be a smooth degree-$d$ surface, where $d \geq 3$,
and let $f$ be a generator of its ideal ${\mathcal I}(X_d)$.

For a line $L \subset \PP^{3}$ we put  $\ii(P,L.\XXd)$ to denote the order of vanishing of the restriction
  $f|_L$ at the point $P$ if $L \nsubseteq \XXd$.
We define $\ii(P,L.\XXd) := \infty$ when $P \in L \subset \XXd$.

Recall that a line $L \subset X_d$ is called  {\em a line of the second kind} if
it meets every plane curve $\Gamma \in  |\mathcal{O}_{\XXd}(1)- L|$ only in  inflection points of the latter
(see \cite[p.~87]{segre43}, \cite{rs-advgeom}).
Otherwise, the line  $L$ is called  {\em a line of the first kind}.

In the proof of Theorem \ref{thm-bound-char-p}
the following proposition -- and in particular
its statement about the multiplicities of the lines of the second kind --
plays a crucial role (see \cite[p.~90]{segre43} for Segre's claim in this direction).

\begin{prop} \label{thm-flecnodal}
Let $\XXd \subset\P^3(\KK)$ be a smooth surface of degree $d>2$, where $\KK$ is
an algebraically closed field of characteristic $p$. Let $p=0$ or
$p>d$.
Then there exists an (effective) Weil divisor  $\flec(\XXd) \in |{\mathcal O}_{\XXd}(11d-24)|$ such that  the equality
$$\operatorname{supp}(\flec(\XXd)) = \{ P \in \XXd \, :  \mbox{there exists a line } L \mbox{ such that } \ii(P,L.\XXd) \geq 4 \}  $$
holds, and each line  $L \subset \XXd$ of the second kind appears in $\flec(\XXd)$ with multiplicity at least two.
\end{prop}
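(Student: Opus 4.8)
The plan is to construct $\flec(\XXd)$ explicitly as the zero locus of a resultant, and then to read off both the class and the multiplicity assertion from that description. First I would fix a point $P\in\XXd$ and parametrise the pencil of tangent lines to $\XXd$ at $P$ by $(s:t)\in\PP^1$. Expanding $f$ along such a line and using Euler's relation (available since $p=0$ or $p>d$), the order $\ii(P,L.\XXd)$ is governed by the second and third directional derivatives of $f$, which assemble into a binary quadratic form $H=H_P(s,t)$ (the second fundamental form) and a binary cubic $C=C_P(s,t)$. A tangent line has $\ii(P,L.\XXd)\ge 3$ exactly when its parameter is a root of $H$, and $\ii(P,L.\XXd)\ge 4$ when it is a common root of $H$ and $C$; hence $P$ is flecnodal precisely when $\mathrm{Res}_{s,t}(H,C)=0$. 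As $P$ varies, the coefficients of $H$ and $C$ are the second- and third-order partials of $f$, of degrees $d-2$ and $d-3$, and a direct degree count (going back to Salmon), tracking the twist coming from the choice of frame in the tangent plane, shows that the resultant glues to a section of $\mo_{\XXd}(11d-24)$. Its zero scheme is the desired $\flec(\XXd)$, and by construction its support is exactly the flecnodal locus. That this section is not identically zero — so that $\flec(\XXd)$ is a genuine divisor — is precisely the statement that the general point of $\XXd$ is not flecnodal (Lemma~\ref{lemm-two}), which is where the input of McCrory--Shifrin \cite{mcrory-shifrin} and, in characteristic $p>d$, of Voloch \cite{voloch} enters.

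For the multiplicity statement, let $L\subset\XXd$ be a line of the second kind. Since the coefficient of $L$ in $\flec(\XXd)$ is computed at the generic point of $L$, I would fix adapted affine coordinates in which $L=\{y=z=0\}$ and $\XXd$ is the graph $z=\phi(x,y)$; as $L\subset\XXd$ one has $\phi(x,0)=0$, so $\phi=y\,\psi(x,y)$, with $y$ serving as a transverse coordinate along $L$. Because the direction of $L$ gives infinite contact, the $L$-direction ($t=0$ in the pencil above) is a common root of $H$ and $C$ all along $L$; hence $\mathrm{Res}_{s,t}(H,C)$ vanishes identically on $\{y=0\}$, and the real task is to show that it vanishes there to order at least $2$ in $y$.

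The crux is the following computation. Writing the resultant as the $5\times 5$ Sylvester determinant of $H$ and $C$ and expanding along the column indexed by the leading coefficients $\phi_{xx}$ and $\phi_{xxx}$ — both of which are $O(y)$ — one finds that the coefficient of $y$ factors as $K\cdot\bigl(3(\psi_0'')^2-2\psi_0'\,\psi_0'''\bigr)$, where $K$ is, up to a nonzero constant, the classical inflectionary expression $\psi_0''\,\psi_1^2-2\psi_1'\,\psi_0'\,\psi_1+\psi_2\,(\psi_0')^2$ in the jets $\psi_0=\psi(x,0)$, $\psi_1=\psi_y(x,0)$, $\psi_2=\psi_{yy}(x,0)$ (primes denoting $\partial_x$). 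The decisive point is then to identify $K\equiv 0$ with $L$ being of the second kind: the residual curves $\Gamma_\lambda\in|\mo_{\XXd}(1)-L|$ are cut by the planes through $L$ and meet $L$ where $\psi(x,0)=\lambda$, and $P=(x,0)$ is an inflection point of $\Gamma_\lambda$ precisely when $K(x)=0$; demanding this for every residual curve, that is for every $x$, is exactly $K\equiv 0$. Thus for a line of the second kind the linear term in $y$ vanishes, $\mathrm{Res}_{s,t}(H,C)$ vanishes to order $\ge 2$ along $L$, and the multiplicity is at least two. I expect the main obstacle to be precisely this last translation — matching Segre's intrinsic second-kind condition with the inflectionary expression $K$ produced by the Sylvester expansion — together with verifying that the frame-dependent construction of the first paragraph yields a well-defined global section of $\mo_{\XXd}(11d-24)$; the hypothesis $p>d$ is used throughout to keep Euler's relation and the low-order Taylor coefficients meaningful.
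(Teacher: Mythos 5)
Your proposal is correct in substance, and it takes a genuinely different route from the paper. The paper works globally: it forms the intersection cycle ${\mathfrak T}^{(1)}\cdot{\mathfrak T}^{(2)}\cdot{\mathfrak T}^{(3)}\cdot(\XXd\times\PP^3)$ in $\PP^3\times\PP^3$, subtracts the diagonal with multiplicity $6$, defines $\flec(\XXd)$ as the pushforward to $\PP^3$ of this cycle intersected with $\PP^{3} \times H$, obtains the class $11d-24$ from the Chow ring of $\PP^3\times\PP^3$, and proves the multiplicity claim softly, by exhibiting two distinct points of $\operatorname{supp}({\mathfrak W})\cap(\PP^3\times H)$ over a general point of $L$ (Lemmas~\ref{lemm-non-degenerate} and \ref{lemm-second-kind}), so that the pushforward picks up $L$ with coefficient at least two. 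You instead rehabilitate Segre's original fiberwise computation: the divisor is the divisor of zeros of the resultant $\mathrm{Res}(H,C)$ of the second- and third-order forms on the tangent pencil, and the multiplicity is read off from a Sylvester expansion. The conceptual point you get right -- and it is exactly what closes the gap described in Remark~\ref{rem-segre-wrong}.b -- is that by \emph{defining} $\flec(\XXd)$ as the divisor of this section, the order of vanishing along $L$ \emph{is} by definition the multiplicity of $L$, so Segre's local computation becomes legitimate. I checked your determinant claim and it is correct: expanding the $5\times5$ Sylvester matrix along the column containing $\phi_{xx}=y\psi_{xx}$ and $\phi_{xxx}=y\psi_{xxx}$, the coefficient of $y$ equals, up to the constant $12$ (nonzero since $p=0$ or $p>d\geq 3$), the product $\bigl(3(\psi_0'')^2-2\psi_0'\psi_0'''\bigr)\cdot\bigl(\psi_0''\psi_1^2-2\psi_1'\psi_0'\psi_1+\psi_2(\psi_0')^2\bigr)$, and the vanishing of the second factor at a general point of $L$ is indeed equivalent, via the standard inflection criterion for the curves $\{\psi=\lambda\}$, to $L$ being of the second kind. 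A further small advantage of your construction: since a nonzero section's zero locus on a smooth surface is pure one-dimensional (Krull), the support statement is immediate once Lemma~\ref{lemm-two} guarantees the section is not identically zero.

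Two points must be filled in before this is a complete proof. First, the gluing claim -- that the frame-dependent resultant is a well-defined global section of $\mo_{\XXd}(11d-24)$ -- is asserted, not proven. It requires checking that the cubic $C$, which is only well defined modulo $H$ (a lift of a tangent direction to $\KK^4$ is ambiguous up to adding a multiple of $P$), leaves $\mathrm{Res}(H,C)$ unchanged, and then carrying out the twist bookkeeping; in a rigorous write-up this is essentially the relative-principal-parts computation of \cite[\S 11.2.1]{eisenbud-harris-3264}, which is precisely the machinery the paper chose to avoid (Remark~\ref{rem-segre-wrong}.a) and which is the counterpart of its Chow-ring computation. Second, in the inflection identification you should handle the degenerate case in which general points of $L$ are singular on their residual curves, i.e.\ $\psi_0'\equiv\psi_1\equiv 0$ along $L$: either observe that then $K\equiv 0$ holds trivially, or exclude it by the separability of $\pi|_L$, which is where the hypothesis $p>d$ enters for the paper (Lemma~\ref{lemm-non-degenerate}) -- your remark that $p>d$ is needed ``to keep Euler's relation meaningful'' does not capture this use. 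These are repairs rather than obstructions; the approach works and, unlike the paper's, yields an explicit local equation showing exactly which geometric conditions force the higher-order vanishing.
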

In the sequel, we call $\flec(\XXd)$ the {\em flecnodal divisor of the surface } $\XXd$.

The proof of Prop.~\ref{thm-flecnodal} will be preceded by several
lemmata. First, we introduce the necessary notation.
For  $j=1,2,3$ we define 
 polynomials ${\mathfrak t}^{(j)} \in \KK[w_0, \ldots, w_3, z_0, \ldots, z_3]$ by the formula
\begin{equation} \label{eq-def-tj}
{\mathfrak t}^{(j)} := \sum_{0 \leq i_1,\ldots,i_j \leq 3}   \frac{\partial f^{j}}{\partial w_{i_1} \ldots \partial w_{i_j}}(w_0, \ldots, w_3) \cdot z_{i_1} \cdot \ldots \cdot z_{i_j} \, .
\end{equation}
 In order to simplify our notation, given a point $P = (p_0, \ldots, p_3) \in \KK^4$ (resp. $P = (p_0: \ldots: p_3) \in \PP^3$) we put
$$
{\mathfrak t}_P^{(j)}(z_0, \ldots, z_3) := {\mathfrak t}^{(j)}(p_0, \ldots, p_3, z_0, \ldots, z_3).
$$
Observe that the zero set of the polynomial ${\mathfrak t}_P^{(1)}$ (resp. ${\mathfrak t}_P^{(2)}$) is
 the projective tangent space  ${\mathbbT}_P \XXd$ (resp. the Hessian quadric  $\mbox{V}_P =\mbox{V}_P \XXd$).

We consider the variety
$$\mmP  := \mmP(\XXd) \subset \XXd \times \mbox{G}(2,4)$$
defined as
$$
\mmP := \{ \, (P,L) \, :  \, P \in \XXd \mbox{ and  the line } L \mbox{ satisfies the condition } \ii(P,L.\XXd) \geq 3 \} \, ,
$$
where $\mbox{G}(2,4)$ is the Grassmanian of lines in  $\PP^{3}(\KK)$. The variety $\mmP$ is endowed with the projections
$$
\pi_1 \, : \, \mmP \rightarrow \XXd  \quad \mbox{and} \quad \pi_2 \, : \, \mmP \rightarrow  \mbox{G}(2,4) \, .
$$

\begin{lemm} \label{lemma-one}

\noindent
{\rm (a)} If $P \in \XXd$, then $\# \pi_1^{-1}(P) \in \{1, 2, \infty \}$.

\noindent
{\rm (b)} The set $\{ P \in \XXd \, : \, \# \pi_1^{-1}(P) = \infty \}$ is finite.
\end{lemm}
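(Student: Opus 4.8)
The plan is to describe the fibre $\pi_1^{-1}(P)$ via the Taylor expansion of $f$ along the lines through $P$ and to reduce everything to a single binary quadratic form. Fixing $P\in\XXd$ and writing a line $L$ through $P$ by a direction $Q$, the coefficients of $f|_L$ are governed by the forms $\mathfrak{t}^{(j)}_P(Q)$ of \eqref{eq-def-tj}; since $f(P)=0$, the order of vanishing satisfies $\ii(P,L.\XXd)\ge 3$ exactly when $\mathfrak{t}^{(1)}_P(Q)=0$ and $\mathfrak{t}^{(2)}_P(Q)=0$, i.e. when $L\subset\mathbbT_P\XXd$ and $Q$ lies on the Hessian quadric $\mathrm V_P$. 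The lines through $P$ inside the tangent plane form a pencil $\cong\PP^1$, and (the Euler relations place $P$ in the radical, so $\mathfrak{t}^{(2)}_P$ descends to the pencil) the restriction of $\mathfrak{t}^{(2)}_P$ to this pencil is a well-defined binary quadratic form $\Phi_P$ whose zero set is precisely $\pi_1^{-1}(P)$.

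Part (a) then follows at once: over the algebraically closed field $\KK$ the form $\Phi_P$ is either identically zero, giving $\#\pi_1^{-1}(P)=\infty$, or nonzero, in which case it has two roots counted with multiplicity and hence one or two distinct zeros. Since a nonzero binary quadratic always has a root over $\KK$, the fibre is never empty, and the trichotomy $\#\pi_1^{-1}(P)\in\{1,2,\infty\}$ results.

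For part (b) the first observation is that $\#\pi_1^{-1}(P)=\infty$ holds exactly when $\Phi_P\equiv 0$, i.e. when the Hessian quadric $\mathrm V_P$ contains the whole tangent plane $\mathbbT_P\XXd$; these are the flat points, where the second fundamental form $\mathfrak{t}^{(2)}_P|_{\mathbbT_P\XXd}$ vanishes. Denote this locus by $F$ and suppose, for contradiction, that $F$ contains an irreducible curve $C$; the plan is to show that $C$ then forces a singular point of $\XXd$. First I would argue that the tangent plane is constant along $C$: flatness means that the differential of the Gauss map $\gamma\colon\XXd\to\check{\PP}^3$, $P\mapsto\mathbbT_P\XXd$, vanishes at every point of $C$, so $\gamma|_C$ has zero differential and is therefore constant, say $\gamma(C)=\{H\}$, whence $C\subset\XXd\cap H$. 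Flatness further says that at each point of $C$ the plane section $\XXd\cap H$ has multiplicity at least $3$, so $f|_H$ vanishes to order $\ge 3$ along $C$. Choosing coordinates with $H=\{w_3=0\}$ and writing $f=f|_H+w_3\,g$ with $\deg g=d-1$, one computes that along $C$ all of $\partial f/\partial w_0,\partial f/\partial w_1,\partial f/\partial w_2$ vanish (they are partials of $f|_H$, hence vanish to order $\ge 2$ along $C$), so $\nabla f|_C=(0,0,0,g|_C)$. Smoothness of $\XXd$ now forces $g|_C$ to be nowhere zero on $C$; but $g|_C$ is a section of $\mo_C(d-1)$, a line bundle of positive degree on the projective curve $C$, hence it must vanish somewhere — producing a point where $\nabla f=0$, a contradiction. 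Therefore $F$ contains no curve and is finite.

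The main obstacle is the step ``tangent plane constant along $C$'' in positive characteristic: the vanishing of $d\gamma$ along $C$ no longer forces $\gamma|_C$ to be constant, since a purely inseparable morphism has identically vanishing differential. Ruling this out is precisely where the hypothesis $p>d$ of Convention~\ref{conv} is needed — the same separability/reflexivity phenomenon (in the spirit of Voloch's theorem \cite{voloch} used for Lemma~\ref{lemm-two}) that forces a general point to be non-flecnodal — and I expect this, rather than the characteristic-zero argument, to be the crux. Once constancy of the tangent plane along $C$ is secured, the smoothness argument above is characteristic-free and completes the proof.
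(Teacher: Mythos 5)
Your part (a) is the paper's argument: the fibre $\pi_1^{-1}(P)$ is identified with the zero locus of the binary quadratic form obtained by restricting ${\mathfrak t}_P^{(2)}$ to the pencil of lines through $P$ inside ${\mathbbT}_P\XXd$, hence has one, two, or infinitely many elements (your observation that the Euler relations, valid since $p>d$, are what make this restriction well defined is a point the paper leaves implicit). In part (b) you follow the paper for the first half and genuinely diverge in the second. Both you and the paper suppose there is a curve $C$ of flat points, note that the differential of the Gauss map $\gamma$ then vanishes along $C$, and conclude that $\gamma|_C$ is constant. At that point the paper quotes Zak's theorem \cite[Thm.~2.3]{zak} on the finiteness of the Gauss map to obtain a contradiction, whereas you argue by hand: writing $f=f|_H+w_3 g$ for the constant tangent plane $H=\{w_3=0\}$, flatness gives $f|_H$ multiplicity at least $3$ at each point of $C$, so $\partial f/\partial w_0$, $\partial f/\partial w_1$, $\partial f/\partial w_2$ vanish along $C$, smoothness forces $g|_C$ to be nowhere zero, and this is impossible because $g|_C$ is a section of the positive-degree line bundle ${\mathcal O}_C(d-1)$ on a projective curve. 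This replacement is correct and complete; in effect you prove from scratch exactly the special case of Zak's tangency theorem that is needed (no plane is tangent to a smooth surface along a curve), so your proof of the lemma does not depend on \cite{zak}.

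The gap is the one you flag yourself: in characteristic $p>d$, vanishing of $d(\gamma|_C)$ does not imply that $\gamma|_C$ is constant, only that it factors through Frobenius, and your appeal to Voloch \cite{voloch} is an expectation rather than an argument, so as written your proof is complete only in characteristic $0$. A degree count does not close it: inseparability gives $p\mid\deg(\gamma|_C)$, and from $(d-1)\deg C=\deg(\gamma|_C)\cdot\deg\gamma(C)$ together with $p>d$ one only deduces $p\mid\deg C$, which is no contradiction. You should know, however, that this is precisely the inference the paper itself makes, verbatim (``the Gauss map is constant on the curve $C$, because its differential vanishes for all $P\in\reg(C)$'') and with no more justification than yours; Zak's result enters only afterwards, and since an inseparable non-constant $\gamma|_C$ contracts no curve, finiteness of the Gauss map does not repair that step either. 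So, up to this weak point --- which is shared with the published proof, and which you, unlike the paper, identify explicitly --- your proposal establishes the lemma by an independent and more elementary route.
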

\begin{proof}  (a) We fix a point $P \in \XXd$.
Obviously, either ${\mathfrak t}_P^{(2)}|_{{\mathbbT}_P \XXd} \equiv 0$ or
${\mathfrak t}_P^{(2)}|_{{\mathbbT}_P \XXd}$ vanishes along exactly one or two lines.

Let $L \subset  {\mathbbT}_P \XXd$ be a line.
By direct computation, ${\mathfrak t}_P^{(2)}$ vanishes along $L$ if and only if
$\ii(P,L.\XXd) \geq 3$.
Thus a finite  fiber $\pi_1^{-1}(P)$ consists of at most two points,   which yields (a).

\noindent
(b) Suppose that there exists a curve $C \subset \XXd$ such that
$$
{\mathfrak t}_P^{(2)}|_{{\mathbbT}_P \XXd} \equiv 0 \mbox{ for every point } P \in C.
$$
Then the Gauss map
is constant on the curve $C$, because its differential vanishes for all $P \in \mbox{reg}(C)$. Thus
the curve $C$ is contained in  a fiber of the Gauss map, 
 which is impossible by Zak's result \cite[Thm.~2.3]{zak}.
\end{proof}

In particular, we have shown that the variety $\mmP$ is two-dimensional.

We will need to know that not all points on a smooth surface can
be flecnodal. In characteristic $p=0$ this result is due
to McCrory and Shifrin (see \cite[Lemma~2.10]{mcrory-shifrin}, \cite[Prop.~11.8]{eisenbud-harris-3264}), whereas
for characteristic $p > d$ the lemma below was shown by Voloch (see \cite[Theorem 1 and Prop.~1]{voloch}):

\begin{lemm} \label{lemm-two} 
Let  $\XXd \subset\P^3(\KK)$ be a smooth surface of degree $d>2$, where $\KK$ is
an algebraically closed field of characteristic $p$, and let $p=0$ or
$p>d$. Then
$$
\XXd \neq \{ P \in \XXd \, : \mbox{there exists a line } L \mbox{ such that } \ii(P,L.\XXd) \geq 4 \}.
$$
\end{lemm}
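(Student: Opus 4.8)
\section*{Proof proposal for Lemma~\ref{lemm-two}}

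The plan is to argue by contradiction: I will assume that every point of $\XXd$ is flecnodal and deduce that $\XXd$ is ruled by lines (or, equivalently, that its Gauss map degenerates), which is impossible for a smooth surface of degree $d\ge 3$. The first step is a purely local characterization of the flecnodal condition, extending the computation already used for Lemma~\ref{lemma-one} by one Taylor order. Fix $P\in\XXd$ and choose affine coordinates so that $\mathbbT_P\XXd=\{z=0\}$ and $\XXd$ is locally the graph $z=q(x,y)+c(x,y)+\cdots$, where $q$ is the binary quadric cut out on $\mathbbT_P\XXd$ by $\mathfrak t_P^{(2)}$ and $c$ is the binary cubic cut out by $\mathfrak t_P^{(3)}$. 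A line $L\subset\mathbbT_P\XXd$ through $P$ in direction $(a:b)$ satisfies $\ii(P,L.\XXd)\ge 3$ exactly when $q(a,b)=0$, and $\ii(P,L.\XXd)\ge 4$ exactly when in addition $c(a,b)=0$. Hence $P$ is flecnodal iff $q$ and $c$ share a common linear factor, i.e.\ iff $\operatorname{Res}(q,c)=0$.

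Next I would globalize this via the incidence variety $\mmP$. The flecnodal locus is the closed set $\pi_1(\mathcal F_0)\subseteq\XXd$, where $\mathcal F_0:=\{(P,L)\in\mmP:\ii(P,L.\XXd)\ge 4\}$ is closed in $\mmP$ (one imposes vanishing of $\mathfrak t^{(3)}$ along $L$ on top of the defining condition of $\mmP$) and $\pi_1$ is proper since $\mathrm{G}(2,4)$ is complete. As $\XXd$ is irreducible, the lemma is equivalent to $\dim\pi_1(\mathcal F_0)\le 1$, and I would prove it by assuming $\pi_1(\mathcal F_0)=\XXd$ and deriving a contradiction. By Lemma~\ref{lemma-one} the fibres of $\pi_1$ on $\mmP$ are finite off a finite set, so under this assumption $\mathcal F_0$ has a component dominating $\XXd$ with generically finite fibres; thus the general point carries an asymptotic line of contact $\ge 4$, and this line is not contained in $\XXd$, since the lines lying on $\XXd$ are finite in number and cannot cover the surface.

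The heart of the matter, and the step I expect to be the main obstacle, is ruling out this configuration. Here I would integrate the \emph{flecnodal} asymptotic direction field (the asymptotic direction along which the contact is $\ge 4$) on a dense open subset and examine a general integral curve $\gamma(s)$, whose tangent line $L_s$ then has $\ii(\gamma(s),L_s.\XXd)\ge 4$ for all $s$. The engine of the argument is a bootstrap: differentiating the two vanishing conditions $\mathfrak t^{(2)}_{\gamma(s)}|_{L_s}\equiv 0$ and $\mathfrak t^{(3)}_{\gamma(s)}|_{L_s}\equiv 0$ along $\gamma$ and using that $\gamma''$ lies in the tangent plane (the osculating plane of an asymptotic curve is the tangent plane) forces the vanishing of $\mathfrak t^{(4)}_{\gamma(s)}|_{L_s}$, i.e.\ contact $\ge 5$; iterating raises the contact order without bound, so $f|_{L_s}\equiv 0$ and $L_s\subset\XXd$. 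Thus a line of $\XXd$ passes through the general point, $\XXd$ is ruled, and a smooth surface of degree $d\ge 3$ cannot be ruled, a contradiction (alternatively, the Gauss map becomes constant along $\gamma$ and one contradicts Zak's theorem \cite{zak}, exactly as in the proof of Lemma~\ref{lemma-one}(b)). Making the chain-rule bookkeeping in this bootstrap precise is the real technical work.

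Finally, the positive-characteristic case. The local Taylor computation of the first step is unaffected once $p>3$, and the bootstrap is valid once $p>d$: then the factorials up to order $d$ are invertible, so differentiation detects contact faithfully and the implication ``contact order $>d$ forces $L\subset\XXd$'' continues to hold (this is precisely what fails in small characteristic, as the Fermat quartic in characteristic $3$ warns). Moreover the Gauss map is separable for $p>d$, so the flecnodal direction field and its integral curves behave as in characteristic zero. This is exactly the range in which Voloch \cite{voloch} carries out the corresponding analysis, and I would follow his treatment to complete the proof in characteristic $p>d$.
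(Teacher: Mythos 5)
The first thing to say is that there is no proof in the paper to compare yours against: the authors import Lemma~\ref{lemm-two} wholesale, from McCrory--Shifrin \cite{mcrory-shifrin} (cf.\ \cite[Prop.~11.8]{eisenbud-harris-3264}) in characteristic zero and from Voloch \cite{voloch} for $p>d$. So your proposal attempts strictly more than the paper does, and what you sketch is in essence Voloch's own argument. Your skeleton is the right one, and the bootstrap you flag as ``the real technical work'' does go through in characteristic zero: writing $g(s,t)=f\bigl(\gamma(s)+tv(s)\bigr)=\sum_{k\ge 4}c_k(s)\,t^k$ with $\gamma'=v$, the chain rule gives $t\,\nabla f(\gamma+tv)\cdot v'=\partial_s g-\partial_t g$, and comparing coefficients of $t$ (using Euler's relation $\nabla f(Q)\cdot Q=d\,f(Q)$ and the fact that the flecnodal direction is generically a \emph{simple} root of the Hessian quadric) forces first that $v'$ is tangent, then $v'\in\operatorname{span}(\gamma,v)$, and then $k\,c_k=0$ successively for $k=4,\dots,d$; since $p>d$ each such $k$ is invertible, so $f|_{L_s}\equiv 0$ and $L_s\subset\XXd$. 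Your terminal contradiction is also correct, provided you justify it as you do in your second paragraph: a smooth surface of degree $d\ge 3$ carries only finitely many lines \emph{in any characteristic}, because $N_{L/\XXd}\cong\mo_L(2-d)$ has no global sections, so it cannot be covered by lines. Note that this computation locates precisely where $p>d$ is consumed (the factors $k\,c_k$), consistent with the smooth strange surfaces of degree $q+1$, $q$ a power of $p$, on which every point carries a tangent line of contact exactly $d$ that does \emph{not} lie on the surface.

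Two genuine defects remain. First, in positive characteristic the phrase ``integrate the flecnodal direction field'' is not available: vector fields have no integral curves in characteristic $p$, and separability of the Gauss map does not restore them -- that appeal is a non sequitur. The repair is to run the identical computation at the generic point with a derivation $D$ of the function field $\KK(\XXd)$ chosen so that $D$ points in the flecnodal direction, which is exactly what Voloch does; as written, your characteristic-$p$ paragraph is not an argument but a deferral to the very citation the paper already makes, so on that case your proposal adds nothing beyond the paper. Second, your alternative ending via Zak is wrong: if $\XXd$ were swept by lines, the Gauss map would \emph{not} become constant along them -- by Zak's theorem \cite{zak} the Gauss map of a smooth surface is finite and hence contracts no curve, and along a line $L\subset\XXd$ it is non-constant of degree $d-2$. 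In Lemma~\ref{lemma-one}(b) constancy was available only because the entire quadric ${\mathfrak t}_P^{(2)}|_{{\mathbbT}_P\XXd}$ vanished along the curve, a far stronger hypothesis than the existence of a single flecnodal direction. Delete that variant and keep the finiteness-of-lines contradiction.
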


In the sequel, the divisor defined by   ${\mathfrak t}^{(j)}$  in $\PP^{3} \times \PP^{3}$ is denoted by ${\mathfrak T}^{(j)}$ and we put
$$
Y_d := (\XXd \times \PP^{3}) \cap \bigcap_{j=1}^{j=3} {\operatorname{supp}}({\mathfrak T}^{(j)}) \, .
$$
Moreover, $\psi \, : \, \PP^{3} \times \PP^{3} \rightarrow \PP^{3}$
stands for the projection onto the first factor and  we define
$$
\Delta_{\XXd} := \{ (P,P) \in \PP^{3} \times \PP^{3} \, : \, P \in \XXd \}.
$$
\begin{lemm} \label{lemm-three}
The variety $Y_d$ is two-dimensional.
\end{lemm}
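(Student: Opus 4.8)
The plan is to study the restriction of the projection to $Y_d$, namely $\psi|_{Y_d}\colon Y_d \to \XXd$ (its image lies in $\XXd$ since $Y_d \subset \XXd \times \PP^3$), and to pin down $\dim Y_d$ by matching a lower bound coming from the diagonal with an upper bound coming from a fibre-dimension count. For $P \in \XXd$ the fibre $F_P := (\psi|_{Y_d})^{-1}(P)$ is, inside the plane $\mathbbT_P\XXd = \{{\mathfrak t}_P^{(1)} = 0\} \cong \PP^2$, the intersection of the conic $\{{\mathfrak t}_P^{(2)} = 0\}$ with the cubic $\{{\mathfrak t}_P^{(3)} = 0\}$. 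By Bézout such an intersection is generically zero-dimensional, and it fails to be so precisely when the conic and the cubic share a common component.

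First I would establish $\dim Y_d \geq 2$ by showing $\Delta_{\XXd} \subseteq Y_d$. Since $f$ is homogeneous of degree $d$, the Euler relations give ${\mathfrak t}_P^{(j)}(P) = d(d-1)\cdots(d-j+1)\,f(P)$ for $j=1,2,3$, which vanishes whenever $P \in \XXd$. Hence $(P,P) \in Y_d$ for every $P \in \XXd$, so the diagonal --- isomorphic to the surface $\XXd$ --- is a two-dimensional subvariety of $Y_d$. This also shows $[P] \in F_P$ for every $P$; this point is the \emph{spurious} solution, corresponding to the degenerate ``line'' $\langle P,P\rangle$ and hence to no genuine line through $P$.

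For the reverse bound I would stratify $\XXd$ by the value of $\dim F_P$ and use the standard inequality $\dim Y_d \leq \max_{e}\bigl(e + \dim\{P : \dim F_P = e\}\bigr)$. Since $F_P \subseteq \PP^2$ one has $\dim F_P \leq 2$, with equality only if ${\mathfrak t}_P^{(2)}|_{\mathbbT_P\XXd} \equiv 0$; by Lemma~\ref{lemma-one}(b) this occurs on a finite set, so the stratum $e=2$ contributes at most $0+2=2$. The decisive stratum is $e=1$: one must show that $B := \{P : \dim F_P \geq 1\}$ --- closed by upper semicontinuity of fibre dimension --- is a proper subset of the irreducible surface $\XXd$, hence at most one-dimensional. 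Here Lemma~\ref{lemm-two} enters: it yields a point $P_0 \in \XXd$ that is not flecnodal, so that no line $L$ satisfies $\ii(P_0,L.\XXd) \geq 4$; consequently $F_{P_0}$ contains no point besides the spurious $[P_0]$ and is therefore finite. Thus $B \neq \XXd$, the stratum $e=1$ contributes at most $1+1=2$, and the stratum $e=0$ contributes at most $0+\dim\XXd = 2$. Combining the three strata gives $\dim Y_d \leq 2$, and with the lower bound, $\dim Y_d = 2$.

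The main obstacle is the input of Lemma~\ref{lemm-two}: without it one could not exclude the possibility that \emph{every} point of $\XXd$ carries an asymptotic direction which persists to order four, making all fibres $F_P$ one-dimensional and forcing $\dim Y_d = 3$. The remaining ingredients --- the Euler computation giving $\Delta_{\XXd}\subseteq Y_d$, the identification of $F_P$ as a conic meeting a cubic in $\mathbbT_P\XXd$, and the bookkeeping in the fibre-dimension inequality --- are routine once the finiteness from Lemma~\ref{lemma-one}(b) is in hand.
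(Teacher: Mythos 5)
Your proof is correct and takes essentially the same route as the paper's: the paper likewise bounds the dimension of $Y_d$ by using Lemma~\ref{lemma-one}.b to dispose of the two-dimensional fibres of $\psi|_{Y_d}$ and Lemma~\ref{lemm-two} -- linked by the direct computation \eqref{eq-useful}, which identifies points of the fibre other than $P$ with lines of contact order at least four at $P$ -- to get generic finiteness of the fibres. Your explicit verification of the lower bound via $\Delta_{\XXd} \subseteq Y_d$ (Euler relations) and the semicontinuity stratification only spell out what the paper leaves implicit, so the two arguments coincide in substance.
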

\begin{proof} By Lemma~\ref{lemma-one}.b, only finitely many fibers of the projection $\psi|_{Y_d}$
are two-dimensional.

Fix a point $P \in \XXd$ such that  the fiber $\pi_1^{-1}(P)$ is finite, i.e., ${\mathfrak t}_P^{(2)}$ vanishes
on  ${\mathbbT}_P \XXd$ only along two
 (not necessarily distinct) lines $L_1, L_2$. By direct computation
\begin{equation} \label{eq-useful}
L_1 \subset  \operatorname{V}({\mathfrak t}_P^{(3)}) \quad \mbox{ iff } \quad \ii(P,L_1.\XXd) \geq 4 \, .
\end{equation}
Thus, by Lemma~\ref{lemm-two}, for generic choice of $P \in \XXd$ the fiber of $(\psi|_{Y_d})^{-1}(P)$
is finite.
\end{proof}
Lemma~\ref{lemm-three} implies that the proper intersection $2$-cycle
\begin{equation} \label{eq-intersectioncycle}
{\mathfrak T}^{(1)} \cdot {\mathfrak T}^{(2)} \cdot {\mathfrak T}^{(3)} \cdot (\XXd \times \PP^{3})
\end{equation}
is well-defined (see e.g. \cite[Chap.~V.C.2]{serre}). One of its components is the variety  $\Delta_{\XXd}$.
\begin{lemm} \label{lemm-four}
The diagonal $\Delta_{\XXd}$ comes up in the intersection cycle \eqref{eq-intersectioncycle}
with multiplicity~$6$.
\end{lemm}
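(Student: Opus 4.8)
The plan is to compute the intersection multiplicity of the three divisors $\mathfrak T^{(1)}, \mathfrak T^{(2)}, \mathfrak T^{(3)}$ together with $X_d \times \PP^3$ along the diagonal $\Delta_{X_d}$, working locally at a generic point $(P,P)$ of the diagonal. Since $\Delta_{X_d}$ is a proper component of the intersection cycle (by Lemma~\ref{lemm-three} the ambient intersection has the expected dimension), the multiplicity is a purely local invariant, and I would fix affine coordinates adapted to $P$: choose homogeneous coordinates so that $P = (0:0:0:1)$ and dehomogenize with $w_3 = 1$, $z_3 = 1$ (or work in a suitable affine chart of the second $\PP^3$). After a linear change of coordinates I may assume $X_d$ is given locally by $f(w_0,w_1,w_2,w_3)$ with $T_P X_d = \{w_0 = 0\}$, so that $f = w_0 \cdot(\text{unit}) + (\text{higher-order terms in } w_1, w_2)$, with the tangent space being the linear part and the Hessian the quadratic part.

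First I would write down the local equations of the three polynomials $\mathfrak t^{(j)}$ along the diagonal. Substituting $z_i = w_i$ into \eqref{eq-def-tj} expresses $\mathfrak t^{(j)}$ via the $j$-th order directional derivatives of $f$; near $(P,P)$ the variables $w_i$ parametrize the surface and $z_i$ parametrize the line direction. The key observation is that $\mathfrak t^{(1)}$ cuts out (the incidence of) the tangent space, $\mathfrak t^{(2)}$ the Hessian/asymptotic condition, and $\mathfrak t^{(3)}$ the flecnodal condition, and that along $\Delta_{X_d}$ these vanish to specific orders in the transverse directions. I would introduce local analytic (or formal) coordinates separating the tangent-to-diagonal directions from the normal ones, reduce to a colength computation in the local ring $\mathcal O_{\PP^3 \times \PP^3, (P,P)}$, and compute the length of the quotient by the ideal generated by $f$ and the three $\mathfrak t^{(j)}$. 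Concretely, the multiplicity is the intersection number obtained by counting, with Serre's $\operatorname{Tor}$-formula or simply as a colength since the intersection is proper, how the three hypersurfaces meet the diagonal transversally versus tangentially. I expect the three factors to contribute orders $1$, $2$, and $3$ respectively along the normal directions of the diagonal, whose product yields the claimed multiplicity $6$.

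The main obstacle will be setting up the local coordinates so that the computation is genuinely transparent rather than a brute-force Gröbner/resultant calculation: one must track how the line-direction variables $z_i$ degenerate onto the diagonal and verify that, at a \emph{generic} point $P$ of $X_d$ (so that the Hessian $\mathfrak t_P^{(2)}$ is nondegenerate and $P$ is not flecnodal, as guaranteed by Lemma~\ref{lemm-two}), the successive vanishing orders are exactly $1, 2, 3$ and no cancellation or extra tangency inflates the multiplicity. I would make this precise by passing to the leading terms (initial forms) of each $\mathfrak t^{(j)}$ in a weighted local coordinate system centered on $\Delta_{X_d}$ and checking that these leading forms meet properly, so that the intersection multiplicity equals the product of the vanishing orders $1 \cdot 2 \cdot 3 = 6$. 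The genericity of $P$ is what lets me assume the transversality of $X_d \times \PP^3$ and the nondegeneracy needed to rule out higher multiplicity, and this is exactly where Lemma~\ref{lemm-three} and equation~\eqref{eq-useful} are invoked.
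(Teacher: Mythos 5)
Your overall framework is reasonable --- compute the coefficient of $\Delta_{\XXd}$ as a local multiplicity at a generic point of the diagonal, reduce to a colength in the three-dimensional local ring of $\XXd\times\PP^3$ along $\Delta_{\XXd}$ (the equations do form a regular sequence there, so the colength is the multiplicity), and use genericity via Lemmas~\ref{lemm-two}, \ref{lemm-three} and \eqref{eq-useful}. But your central quantitative claim is false: the hypersurfaces ${\mathfrak T}^{(1)},{\mathfrak T}^{(2)},{\mathfrak T}^{(3)}$ do \emph{not} vanish to orders $1,2,3$ in the directions normal to the diagonal; each vanishes to order exactly one, and with the \emph{same} initial form. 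To see this, set $z=w+\epsilon$ and write $\ell_w(\epsilon):=\sum_i \tfrac{\partial f}{\partial w_i}(w)\,\epsilon_i$, $q_w(\epsilon):=\sum_{i,j} \tfrac{\partial^2 f}{\partial w_i\partial w_j}(w)\,\epsilon_i\epsilon_j$, $c_w(\epsilon):=\sum_{i,j,k} \tfrac{\partial^3 f}{\partial w_i\partial w_j\partial w_k}(w)\,\epsilon_i\epsilon_j\epsilon_k$. Expanding \eqref{eq-def-tj} multilinearly and using the Euler identity (the coefficients below are nonzero exactly because $p=0$ or $p>d$), one finds on $\XXd\times\PP^3$, i.e.\ modulo $f(w)=0$:
\begin{align*}
{\mathfrak t}^{(1)}(w,w+\epsilon) &= \ell_w(\epsilon)\,,\\
{\mathfrak t}^{(2)}(w,w+\epsilon) &= 2(d-1)\,\ell_w(\epsilon) + q_w(\epsilon)\,,\\
{\mathfrak t}^{(3)}(w,w+\epsilon) &= 3(d-1)(d-2)\,\ell_w(\epsilon) + 3(d-2)\,q_w(\epsilon) + c_w(\epsilon)\,.
\end{align*}
All three initial forms along $\epsilon=0$ are proportional to the single linear form $\ell_w(\epsilon)$: the three hypersurfaces are mutually tangent along the diagonal. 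Consequently the test you propose --- take the initial form of each ${\mathfrak t}^{(j)}$ and check that these meet properly, then multiply the vanishing orders --- fails at its first step: the three initial forms cut out one and the same hyperplane of the normal space, so they do not meet properly, and the orders are $1,1,1$, not $1,2,3$.

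The repair needs an idea that is absent from your proposal: one must change the generators of the ideal before taking initial forms. By the displayed identities, the ideal $({\mathfrak t}^{(1)},{\mathfrak t}^{(2)},{\mathfrak t}^{(3)})$ along the diagonal equals $(\ell_w,q_w,c_w)$, whose generators are homogeneous of degrees $1,2,3$ in the normal variables $\epsilon$. For generic $P$ these three forms have only $\epsilon=0$ as a common zero --- this is precisely where Lemma~\ref{lemm-two} and \eqref{eq-useful} enter: on the plane $\ell_P=0$ the conic $q_P$ splits into the two asymptotic lines (Lemma~\ref{lemma-one}), and $c_P$ vanishes identically on neither, since otherwise $P$ would be flecnodal --- and then B\'ezout for a homogeneous complete intersection gives colength $1\cdot 2\cdot 3=6$. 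This corrected computation is in substance the one in the paper, which slices with the fiber over a generic $P$ and works inside ${\mathbbT}_P\XXd$: there $V({\mathfrak t}_P^{(2)}|_{{\mathbbT}_P\XXd})=L_1\cup L_2$ and ${\mathfrak t}_P^{(3)}|_{L_k}$ has exactly a triple root at $P$, so the multiplicity is $3+3=6$. Your final number is right, but the orders $1,2,3$ are attached to the wrong objects, and the verification you describe would stall exactly at the point where the actual work lies.
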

\begin{proof}
We are
to show
that, for generic choice  of the point $P \in \XXd$,  the intersection multiplicity of  the curves $V({\mathfrak t}_P^{(j)}|_{{\mathbbT}_P \XXd})$,
where $j = 2,3$, in  $P$ equals $6$. By Lemma~\ref{lemma-one}.(b) we can assume that $V({\mathfrak t}_P^{(2)}|_{{\mathbbT}_P \XXd})$ consists of two lines $L_1, L_2$.
Moreover, Lemma~\ref{lemm-two} and \eqref{eq-useful} allow us to require that
$$
{\mathfrak t}_P^{(3)}|_{L_k}  \mbox{ does not vanish identically }  \mbox{ for } k= 1,2.
$$
Then, by direct computation,   the restriction
 ${\mathfrak t}_P^{(3)}|_{L_k}$ has a triple root in $P$ for $k =1,2$ and the proof is complete.
\end{proof}
Given a principal line $L_k \subset T_P\XXd$,  the above proof shows that
\begin{equation} \label{eq-two-possibilities}
\mbox{ either } {\mathfrak t}_P^{(3)}|_{L_k} \equiv 0 \quad \mbox{ or } P \mbox{ is the unique zero of } {\mathfrak t}_P^{(3)}|_{L_k}
\end{equation}

In particular, Lemma~\ref{lemm-four}, all components appear in the cycle
\begin{equation} \label{eq-def-w}
{\mathfrak W} :=  {\mathfrak T}^{(1)} \cdot {\mathfrak T}^{(2)} \cdot {\mathfrak T}^{(3)} \cdot (\XXd \times \PP^{3}) - 6 \Delta_{\XXd}
\end{equation}
with non-negative coefficients. Moreover, by definition,
the set $\psi(\operatorname{supp}({\mathfrak W}))$ consists of the points $P$ such that ${\mathfrak t}_P^{(1)}$, $\ldots$, ${\mathfrak t}_P^{(3)}$
vanish simultaneously along a line. From \eqref{eq-useful} and
 \eqref{eq-two-possibilities},
 we obtain the equality
\begin{equation} \label{eq-pushforwardisok}
 \psi({\operatorname{supp}({\mathfrak W})}) = \{ P \in \XXd \, : \mbox{ there exists a line } L \mbox{ such that } \ii(P,L.\XXd) \geq 4 \}.
\end{equation}

Let $L \subset  \XXd$ be a line.
Recall that the linear
system $|{\mathcal O}_{\XXd}(1) - L|$ endows the surface in question with a  fibration $$\pi \, : \,  \XXd \to \PP^1.$$
Let us follow \cite{segre43} and put $\Gamma_P$ to denote its fiber that is contained in the tangent space
${\mathbbT}_P \XXd$
for a point $P \in L$. One can easily check that if the Hessian quadric $\mbox{V}_P$
does not contain the tangent space in question, then the line residual to $L$ in the (scheme-theoretic) intersection ${\mathbbT}_P \XXd \cap V_P$
is tangent to the curve  $\Gamma_P$ in the point $P$.

For the proof of Prop.~\ref{thm-flecnodal} we will need the following observation.
\begin{lemm} \label{lemm-non-degenerate}
 Let $L \subset  \XXd$ be a line. Then the set
$$
\{ P \in L \, : \, V({\mathfrak t}_P^{(1)})  \cap V({\mathfrak t}_P^{(2)}) \mbox{ does not consist of two distinct lines } \}
$$
is finite.
\end{lemm}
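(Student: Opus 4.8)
The plan is to show that, as $P$ runs over $L$, the conic $V(\mathfrak t_P^{(1)})\cap V(\mathfrak t_P^{(2)})=\mathbbT_P\XXd\cap V_P$ fails to split into two distinct lines only on the zero locus of a single nonzero binary form on $L$, together with the finite set supplied by Lemma~\ref{lemma-one}.b.

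First I would pin down the shape of this conic. Since $p=0$ or $p>d$, Euler's relation shows that the gradient (in $z$) of $\mathfrak t_P^{(2)}$ evaluated at $P$ equals $2(d-1)$ times the coefficient vector of $\mathfrak t_P^{(1)}$; hence $\mathbbT_P\XXd$ is the tangent plane of the Hessian quadric $V_P$ at $P$, and the conic $\mathbbT_P\XXd\cap V_P$ is singular at $P$, i.e. a pair of (possibly equal) lines through $P$. Because $L\subset\XXd$ gives $\ii(P,L.\XXd)=\infty$, the computation in Lemma~\ref{lemma-one}.a shows that $L$ is always one of these two lines. Thus the conic is not two distinct lines precisely when either $\mathfrak t_P^{(2)}|_{\mathbbT_P\XXd}\equiv 0$ (the whole plane, a finite set by Lemma~\ref{lemma-one}.b) or the conic equals $2L$. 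By the observation made just before the lemma, off the first (finite) locus the line residual to $L$ in $\mathbbT_P\XXd\cap V_P$ is the tangent line to $\Gamma_P$ at $P$; so the conic equals $2L$ exactly when $\Gamma_P$ fails to meet $L$ transversally at $P$.

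Next I would make this tangency condition explicit. Choose coordinates with $L=V(w_2,w_3)$ and write $f=w_2g_2+w_3g_3$ with $g_2,g_3$ of degree $d-1$, and set $a:=g_2(w_0,w_1,0,0)$, $b:=g_3(w_0,w_1,0,0)$, binary forms of degree $d-1$ having no common zero (smoothness of $\XXd$ along $L$). For $P=(s:t:0:0)$ the tangent plane is $V(a\,w_2+b\,w_3)$, and a direct computation shows that $\Gamma_P$ is cut out on it by $h_P:=b(s,t)\,g_2-a(s,t)\,g_3$. Differentiating $h_P$ and evaluating at $P$ (using that restriction to $L$ commutes with $\partial_{w_0},\partial_{w_1}$), the non-transversality of $\Gamma_P$ and $L$ at $P$ translates into $W_0(s,t)=W_1(s,t)=0$, where $W_0:=b\,\partial_s a-a\,\partial_s b$ and $W_1:=b\,\partial_t a-a\,\partial_t b$. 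The Euler relation yields $sW_0+tW_1=0$, so these two vanishings amount to the single equation $W=0$ for the Wronskian-type form $W:=W_1$, a binary form of degree $2d-3$; the case where $\Gamma_P$ is singular at $P$ is subsumed here, since it too forces $W_0=W_1=0$.

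Finally I would prove $W\not\equiv 0$, which is the crux. If $W\equiv 0$, then dehomogenizing gives $\bar a'\bar b-\bar a\bar b'\equiv 0$, i.e. $(\bar a/\bar b)'=0$, so $\bar a/\bar b\in\KK(s^p)$; because $\deg a=\deg b=d-1<p$, this forces $\bar a/\bar b$ to be constant, whence $a$ and $b$ are proportional. This is exactly the step where $p>d$ is indispensable: in positive characteristic the vanishing of a Wronskian does not by itself force proportionality, and only the degree bound $d-1<p$ excludes a nonconstant $p$-th-power phenomenon. Proportionality of $a$ and $b$ would make the tangent plane $V(a\,w_2+b\,w_3)$ constant along $L$, i.e. the Gauss map constant on $L$, contradicting Zak's theorem \cite[Thm.~2.3]{zak} just as in the proof of Lemma~\ref{lemma-one}.b. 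Hence $W$ is a nonzero binary form on $L\cong\PP^1$, its zero locus is finite, and together with the finite locus of Lemma~\ref{lemma-one}.b this proves the lemma.
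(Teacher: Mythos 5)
Your proof is correct and is in essence the same as the paper's: you make the identical decomposition (the locus where $\mathfrak t_P^{(2)}$ kills all of ${\mathbbT}_P\XXd$ is finite by Lemma~\ref{lemma-one}.b, and the remaining bad points are those where the conic degenerates to $2L$, i.e.\ where $\Gamma_P$ is tangent to $L$), and your map $(s:t)\mapsto(a(s,t):b(s,t))$ is exactly the paper's restricted fibration $\pi|_L$, with your Wronskian $W$ being precisely its ramification divisor. Your argument that $W\equiv 0$ would force $\bar a/\bar b\in\KK(s^p)$ and hence, by the degree bound $d-1<p$, be constant is exactly the separability of the degree-$(d-1)$ map $\pi|_L$ that the paper deduces from \cite[Prop.~IV.2.5]{hartshorne}; you have simply written out in explicit coordinates (including the tangency computation via $h_P$) what the paper handles by citing \cite{hartshorne} and the proofs of \cite[Lemmata~3.1,~3.2]{ramsschuett}.
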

\begin{proof} 
Observe that our assumption on the base field $\KK$ combined with \cite[Prop.~IV.2.5]{hartshorne} implies that the map $\pi|_L \, : \, L \rightarrow \PP^1$ (i.e., the restriction of the fibration $\pi$
to the line $L$) is separable.

Let $P \in L$ be a point. By Lemma~\ref{lemma-one}, we can assume that the Hessian quadric $V_P$ does not contain the tangent space
${\mathbbT}_P \XXd$. If the Hessian quadric and the tangent space meet along the line $L$ with multiplicity two, then $L$ is tangent to the curve  $\Gamma_P$
in the point $P$. Thus $P$ is the ramification point of the degree-$(d-1)$ map $\pi|_L$ 
(one can repeat verbatim the proof of \cite[Lemmata~3.1,~3.2]{ramsschuett}).  But  $\pi|_L$ is generically etale, so it has  only finitely many ramification points, and the claim follows.
\end{proof}

One has the following property  of lines of the second kind.

\begin{lemm} \label{lemm-second-kind}
Let  $L \subset  \XXd$ be a line and let  $P \in L$ be a point such that the Hessian quadric
 $\mbox{V}_P$
does not contain the tangent space ${\mathbbT}_P \XXd$.
If the line $L$ is of the second kind, then
the form ${\mathfrak t}_P^{(3)}$ vanishes along the (set-theoretic) intersection  ${\mathbbT}_P \XXd \cap V_P$.
\end{lemm}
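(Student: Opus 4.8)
The plan is to reduce the vanishing of $\mathfrak{t}_P^{(3)}$ on the conic $\mathbbT_P\XXd \cap V_P$ to a statement about orders of contact, and then to feed in the second-kind hypothesis through the fibration $\pi$. Since $L \subset \XXd$ passes through $P$, the tangent plane $\mathbbT_P\XXd$ contains $L$, and $\ii(P,L.\XXd)=\infty$ forces $\mathfrak{t}_P^{(2)}$ to vanish along $L$ (Lemma~\ref{lemma-one}), i.e.\ $L \subset V_P$. As $V_P \not\supseteq \mathbbT_P\XXd$, the form $\mathfrak{t}_P^{(2)}|_{\mathbbT_P\XXd}$ is not identically zero and cuts out one or two lines, so $\mathbbT_P\XXd \cap V_P = L \cup L'$ set-theoretically, with $L'$ the residual line. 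It therefore suffices to show that $\mathfrak{t}_P^{(3)}$ vanishes along each of $L$ and $L'$, and by \eqref{eq-useful} this reduces to checking that the contact order at $P$ of each line with $\XXd$ is at least $4$. For $L$ this is immediate, since $\ii(P,L.\XXd)=\infty\ge 4$.

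For the residual line $L'$ the assumption that $L$ is of the second kind is what I would use. By the remark preceding Lemma~\ref{lemm-non-degenerate}, the hypothesis $V_P \not\supseteq \mathbbT_P\XXd$ guarantees that $L'$ is the tangent line to the fiber $\Gamma_P$ at $P$. The plane section $\mathbbT_P\XXd \cap \XXd$ is singular at $P$, so $P$ lies on $\Gamma_P$ as well as on $L$; thus $P$ is one of the points in which $L$ meets the fiber $\Gamma_P \in |\mathcal{O}_{\XXd}(1)-L|$, and since $L$ is of the second kind, $P$ is an inflection point of $\Gamma_P$, i.e.\ its tangent line $L'$ has contact order at least $3$ with $\Gamma_P$ at $P$. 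Writing $f|_{\mathbbT_P\XXd} = \ell\cdot\gamma$ with $\ell,\gamma$ the defining forms of $L$ and $\Gamma_P$ in the plane, and restricting once more to $L'$, I get $\ii(P,L'.\XXd)=\operatorname{ord}_P(\ell|_{L'})+\operatorname{ord}_P(\gamma|_{L'})$; here the first term equals $1$ because the distinct lines $L,L'$ meet transversally at $P$, and the second is at least $3$ by the inflection bound. Hence $\ii(P,L'.\XXd)\ge 4$, so \eqref{eq-useful} gives $L' \subset V(\mathfrak{t}_P^{(3)})$, and together with the first paragraph $\mathfrak{t}_P^{(3)}$ vanishes on all of $L \cup L'=\mathbbT_P\XXd \cap V_P$.

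I expect the crux to be the passage from the projective second-kind condition to the algebraic inequality $\ii(P,L'.\XXd)\ge 4$: this hinges on identifying the tangent to $\Gamma_P$ at $P$ with the residual line $L'$ and on knowing that $P$ is a smooth point of $\Gamma_P$ lying on $L$, so that ``inflection point'' is meaningful and carries the contact bound. The degenerate configurations are harmless: if $L'=L$ --- which by Lemma~\ref{lemm-non-degenerate} happens for only finitely many $P\in L$ --- then $\mathbbT_P\XXd \cap V_P=L$ and the first paragraph already suffices, whereas if $L'\subset \XXd$ then $\ii(P,L'.\XXd)=\infty$ and the conclusion is trivial.
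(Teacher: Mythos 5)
Your proof is correct and takes essentially the same route as the paper's: both reduce to the residual line $L'$, use the remark preceding Lemma~\ref{lemm-non-degenerate} to identify $L'$ as the tangent to $\Gamma_P$ at $P$, combine the inflection condition (contact $\geq 3$ with $\Gamma_P$) with the extra intersection coming from $L$ to get $\ii(P,L'.\XXd)\geq 4$, and conclude via \eqref{eq-useful}. Your explicit factorization $f|_{{\mathbbT}_P \XXd}=\ell\cdot\gamma$ and the treatment of the degenerate cases merely spell out details the paper leaves implicit.
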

\begin{proof} 
We can assume that the plane and the quadric meet along two distinct lines,
one of which is not contained in $\XXd$ (otherwise the claim is obvious, because ${\mathfrak t}_P^{(3)}$ vanishes along  $L \subset  \XXd$).

Let $L'$ be the line residual to $L$ in ${\mathbbT}_P \XXd \cap V_P$. As we already explained, the line $L'$ is tangent to the fiber  $\Gamma_P$ of $\pi$ in the point $P$.
Since $L$ is of the second kind, $P$ is an inflection point of the curve $\Gamma_P$, so  $L'$ meets  $\Gamma_P$ with multiplicity at least $3$ in the point $P$.
But $L \subset \XXd$ also meets $L'$ in the point $P$, so we have
$$
\ii(P,L'.\XXd) \geq 4
$$
The claim follows directly from \eqref{eq-useful}.
\end{proof}

Now we are in position to give a proof of Prop.~\ref{thm-flecnodal}. In the proof below we maintain the notation of this section.
In particular the cycle ${\mathfrak W}$ is given by   \eqref{eq-def-w}, and $\psi$ denotes the projection $\PP^{3} \times \PP^{3} \rightarrow \PP^{3}$ onto the first factor.

\begin{proof}[Proof of Prop.~\ref{thm-flecnodal}]
Let $H \subset \PP^{3}$ be a generic hyperplane. We claim that the effective divisor
$$
\flec(\XXd) := \psi_{*}({\mathfrak W} \cdot (\PP^{3} \times H))
$$
has the required properties.

Indeed, one can easily see that
$$
\psi({\operatorname{supp}({\mathfrak W})}) = \psi({\operatorname{supp}({\mathfrak W})} \cap (\PP^3 \times H)),
$$
so \eqref{eq-pushforwardisok} implies that the support of  $\flec(\XXd)$ satisfies the claim of the proposition.

In order to show that  $\flec(\XXd) \in |{\mathcal O}_{\XXd}(11d-24)|$ we  compute the class
$$
[\psi_{*}({\mathfrak W} \cdot (\PP^{3} \times H))]
$$
in the Chow ring $A^{*}(\PP^{3})$.
We put $h_1 := [H \times \PP^{3}]$ and $h_2 := [\PP^{3} \times H]$.
At first we compute in $\mbox{A}^{*}(\PP^{3} \times \PP^{3})$:
\begin{eqnarray*}
[{\mathfrak T}^{(1)}] \cdot [{\mathfrak T}^{(2)}] \cdot [{\mathfrak T}^{(3)}] &=&  ((d-1) \, h_1 + h_2) \cdot ((d-2) \, h_1 + 2 \, h_2) \cdot ((d-3) \, h_1 + 3 \, h_2) \\
 &  = & 6 \, h_2^3 + (11d-18) \, h_2^2 \cdot h_1 \\
 &    & {} + (\mbox{terms of degree at most one w.r.t. } h_2)
\end{eqnarray*}
Since $[\Delta_{\PP^{3}}].\psi^{*}([\XXd]) = [\Delta_{\XXd}]$ we have
\begin{equation} \label{eq-firststep}
\psi_{*}([{\mathfrak W}] \cdot h_2)  = \psi_{*}( ([{\mathfrak T}^{(1)}] \cdot [{\mathfrak T}^{(2)}] \cdot [{\mathfrak T}^{(3)}]- 6 [\Delta_{\PP^{3}}]) \cdot \psi^{*}[\XXd] \cdot h_2).
\end{equation}
Recall that, by  \cite[Ex.~8.4.2]{FultonInters}, the class $[\Delta_{\PP^{3}}]$ of the diagonal in $\PP^{3} \times \PP^{3}$
can be expressed as
\begin{equation*} \label{eq-deltapn}
[\Delta_{\PP^{3}}] = h_1^3 + h_1^2\cdot h_2 + h_1.h_2^2 + h_2^3   \, ,
\end{equation*}
whereas $h_1 = \psi^{*}[H]$. Therefore, from \eqref{eq-firststep}, we obtain that
\begin{align*}
\psi_{*}([{\mathfrak W}] \cdot h_2) &= (\psi_{*}([{\mathfrak T}^{(1)}] \cdot  [{\mathfrak T}^{(2)}]
 \cdot [{\mathfrak T}^{(3)}] ) \cdot h_2) - 6 \, \psi_{*}([\Delta_{\PP^{3}}] \cdot h_2) \cdot [\XXd] \\
 &= (11d-24) \, {\mathcal O}_{\PP^{3}}(1) \cdot  [\XXd]
\end{align*}
and the proof of that part of the proposition   is complete.

Finally, let $L \subset  \XXd$ be a line of the second kind and let $P \in L$ be a point such that
 ${\mathbbT}_P \XXd$ and the Hessian quadric $V_P$ meet along two distinct lines.
As in Lemma~\ref{lemm-second-kind} we put $L'$ to denote the line residual to $L$ in ${\mathbbT}_P \XXd \cap V_P$.
We can assume that the hyperplane $H$ meets  $L$ (resp. $L'$) in the point $Q \neq P$ (resp. $Q' \neq P$).
Obviously we have $(P,Q) \in {\operatorname{supp}({\mathfrak W})} \cap (\PP^3 \times H)$.
Moreover, by Lemma~\ref{lemm-second-kind}, the point  $(P,Q')$ also belongs to the
set ${\operatorname{supp}({\mathfrak W})} \cap (\PP^3 \times H)$. Since $Q \neq Q'$, Lemma~\ref{lemm-non-degenerate} implies
that the restriction of the projection $\psi$,
$$
\psi^{-1}(L) \cap {\operatorname{supp}({\mathfrak W})} \cap (\PP^3 \times H) \rightarrow L \,,
$$
is of degree at least two, so the claim on the multiplicity follows from the definition of the map $\psi_{*}$.
\end{proof}

\begin{examp} \label{example-schur} 
An elementary computation shows that the Schur quartic
$$
x_0^4-x_0 x_1^3=x_3^4-x_3 x_4^3
$$
contains exactly $64$ lines:
$48$ lines of the first kind and $16$ lines of the second kind. Since the flecnodal
divisor of a quartic surface has degree $80$,
 each line of the second kind
must come in the flecnodal divisor precisely with multiplicity two. Thus, the lower bound of Prop.~\ref{thm-flecnodal} is sharp.
\end{examp}

\begin{rem} \label{rem-segre-wrong}
(a) The idea of studying lines on a surface via points of fourfold contact goes back to work of Salmon and Clebsch on cubic surfaces (see \cite{kollar} and the bibliography therein). In particular,
an equation of the flecnodal divisor is obtained in \cite{Clebsch} via  projection of the intersection of the varieties  ${\mathfrak T}^{(j)}$.
A beautiful exposition of a modern treatment of this approach  can be found in \cite[$\mathsection$ 11.2.1]{eisenbud-harris-3264}. Still, for the proof of
 Prop.~\ref{thm-flecnodal}, we find it more convenient to avoid the use of bundles of relative principal parts. In this way we can
control the  behaviour of the flecnodal divisor along a line of the second kind.

(b) As we already explained, the claim on multiplicities of lines of the second kind in the flecnodal divisor
was stated in \cite[p.~90]{segre43}. Segre (see \cite[(7) on p.~88]{segre43}) justified it by giving an explicit formula for an analytic function
(defined on an open neighbourhood of a point $P$ on a line $L \subset \XXd$)
that vanishes along the set $\mbox{supp}(\flec(\XXd))$  and showing that the function in question has multiplicity at least two along the line $L$ provided the latter is of the second kind.
Unfortunately, this 
argument does not explain why the function in question is a local equation of the flecnodal divisor (although it explains why its set of zeroes contains the  support $\mbox{supp}(\flec(\XXd))$), i.e., it does not explain why its order
of vanishing along the line $L$ yields
any information on the multiplicity with which $L$ comes up in the divisor $\flec(\XXd)$.
\end{rem}


\section{Bound on the number of lines}

We recall the following fact that we need for the proof of Thm.~\ref{thm-bound-char-p}.

\begin{claim} \label{claim-first-kind}  {\rm (\cite[p.~88]{segre43})} Assume that the characteristic $p$ of the ground field is either zero or bigger than $d$. A line $L \subset \XXd$ of the first kind is met by at most $(8d-14)$
other lines lying on the surface $X_d$.
\end{claim}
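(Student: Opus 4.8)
The plan is to convert the problem into a one–variable zero count on $L$ itself. The key correspondence is this: if $M\neq L$ is a line on $\XXd$ meeting $L$ at a point $P$, then $M$ and $L$ are coplanar, so $M$ is an asymptotic line of $\XXd$ at $P$; since $\ii(P,L.\XXd)=\infty$ already accounts for the asymptotic direction of $L$, the line $M$ must be the \emph{second} asymptotic line at $P$, i.e.\ the line $L'=L'_P$ residual to $L$ in $\mathbbT_P\XXd\cap V_P$ from Lemma~\ref{lemm-second-kind}. Conversely, every $P\in L$ with $L'_P\subset\XXd$ produces such a line. Away from the finitely many exceptional points of Lemmas~\ref{lemma-one} and \ref{lemm-non-degenerate} (where $V_P\supset\mathbbT_P\XXd$ or the two asymptotic directions coincide), there is exactly one line $M=L'_P$ through each $P$, so counting lines meeting $L$ amounts to bounding $\#\{P\in L : L'_P\subset\XXd\}$.

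Next I would make $L'_P$ explicit. Choosing affine coordinates with $L=\{x_2=x_3=0\}$ and $P=(s,0,0)$, one writes $f=x_2A+x_3B$, so that $\mathbbT_P\XXd=\{a(s)x_2+b(s)x_3=0\}$ with $a=A|_L,\ b=B|_L\in\KK[s]$ of degree $d-1$, and $\pi|_L=[a:b]$. A direct computation of ${\mathfrak t}^{(2)}_P$ on $\mathbbT_P\XXd$ shows that its two asymptotic directions are the $L$-direction and the direction $v'(s)=(-Q,\,2Wb,\,-2Wa)$, where $W=a'b-ab'$ is the Wronskian (its zeros are the parabolic points, and $\deg W=2d-4$ by separability of $\pi|_L$) and $Q$ is the quadratic combination $e\,b^2-2f_{23}\,ab+g\,a^2$ of the second partials $e=f_{x_2x_2}|_L$, $f_{23}=f_{x_2x_3}|_L$, $g=f_{x_3x_3}|_L$. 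By \eqref{eq-useful}, the contact order $\ii(P,L'_P.\XXd)$ exceeds $3$ exactly when ${\mathfrak t}^{(3)}_P(v'(s))$ vanishes. Expanding ${\mathfrak t}^{(3)}_P(v'(s))$ and dividing out the spurious factor $W$ produced by the degeneration $v'\to(\ast,0,0)$ at the parabolic points, one obtains a polynomial $\tilde c_3(s)$ on $L$ whose zeros are precisely the points where the genuine second asymptotic line has contact $\geq 4$.

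The crux is the degree computation $\deg\tilde c_3=8d-14$. Since $L$ is of the \emph{first} kind, a general $P\in L$ is not an inflection point of $\Gamma_P$, i.e.\ $\ii(P,L'_P.\XXd)=3$; hence $\tilde c_3\not\equiv 0$ and the count is finite, which is where the first–kind hypothesis and Lemma~\ref{lemm-two} enter. In the expansion the top–degree contributions cancel in exactly the way the leading terms of $a'b$ and $ab'$ cancel in $W$ (reflecting that $\pi|_L$ has degree $d-1$, not $d$), and the surviving term, cubic in the third partials of $f$, pins down $\deg\tilde c_3=8d-14$. As a check, for $d=3$ the restriction $f|_{L'_P}$ has degree $3$, so contact $\geq 4$ already forces $L'_P\subset\XXd$; every zero of $\tilde c_3$ is then a line, and one recovers the classical value $8\cdot 3-14=10$ of lines meeting a line on a smooth cubic.

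Finally I would assemble the bound. Each line $M$ meeting $L$ has $\ii(P,L'_P.\XXd)=\infty\geq 4$ at $P=M\cap L$, hence yields a zero of $\tilde c_3$; by the first paragraph distinct lines give distinct points $P$ outside the finite exceptional locus, so these contribute at most $\deg\tilde c_3$ to the count. The exceptional points of Lemmas~\ref{lemma-one}(b) and \ref{lemm-non-degenerate} must be inspected separately: at each such $P$ only the (at most two) asymptotic lines can lie on $\XXd$, and one checks that $\tilde c_3$ vanishes at $P$ to order at least the number of lines of $\XXd$ through $P$, so no line is lost from the estimate. Therefore $\#\{\text{lines meeting }L\}\leq\deg\tilde c_3=8d-14$. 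The main obstacle I anticipate is precisely the degree bookkeeping of the third paragraph — isolating and dividing out the exact power of the parabolic factor $W$, and verifying that the putative degree-$(8d-13)$ terms cancel so that the degree is $8d-14$ on the nose; the hypotheses $p=0$ or $p>d$ are what guarantee $W\not\equiv 0$ and the tameness needed for these leading-term computations.
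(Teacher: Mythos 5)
Two preliminary remarks on the comparison itself: the paper does \emph{not} prove Claim~\ref{claim-first-kind} --- it is imported from Segre \cite[p.~88]{segre43} and used as a black box (hence the label ``Claim'' with a citation), so your proposal is not competing with an internal argument but reconstructing Segre's. The skeleton you chose is indeed the right one, and it is the one used in \cite{RS}, \cite{ramsschuett} for $d=4,5$: any line $M\neq L$ of $\XXd$ through $P\in L$ lies in ${\mathbbT}_P\XXd$ and is asymptotic, hence (at non-degenerate points) equals the residual line $L'_P$; then $M\subset\XXd$ forces $\ii(P,L'_P.\XXd)\geq 4$, which by \eqref{eq-useful} is the vanishing of ${\mathfrak t}^{(3)}_P$ in the residual direction, and first-kind-ness makes the resulting polynomial on $L$ nonzero. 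Your formula for $v'$ is correct, and the degree claim you defer does go through: writing a lift of $v'$ as $Q\,Q'-2W R$ with $Q'=(0,1,0,0)$, $R=(0,0,b,-a)$, the cube term ${\mathfrak t}^{(3)}_P(Q',Q',Q')=f_{111}|_L$ vanishes identically (since $f=x_2A+x_3B$), so one factor $W$ splits off and the remainder is $-6Q^2W'+12QW\,{\mathfrak t}^{(3)}_P(Q',R,R)-8W^2{\mathfrak t}^{(3)}_P(R,R,R)$ (polarizations of ${\mathfrak t}^{(3)}_P$); its first two terms have naive degree $8d-13$ and their leading coefficients cancel because the leading coefficient of ${\mathfrak t}^{(3)}_P(Q',R,R)$ is exactly $(d-2)$ times that of $Q$ (each coefficient of $Q$ gets differentiated) while that of $W'$ is $(2d-4)$ times that of $W$; the third term has degree $8d-14$. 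Note, however, that in your write-up this computation --- which is the entire content of the number $8d-14$ --- is announced as ``the main obstacle I anticipate'' rather than carried out.

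The genuine gap is your final paragraph. At the exceptional points of Lemma~\ref{lemma-one}.b, where the Hessian quadric contains ${\mathbbT}_P\XXd$, it is \emph{false} that ``only the (at most two) asymptotic lines can lie on $\XXd$'': at such a point every direction in ${\mathbbT}_P\XXd$ is asymptotic, and $\XXd$ may contain up to $d-1$ lines through $P$, namely line components of the residual plane curve $\Gamma_P$ of degree $d-1$. (The other degeneration of Lemma~\ref{lemm-non-degenerate}, where ${\mathbbT}_P\XXd\cap V_P$ is the double line $2L$, is harmless: any line of $\XXd$ through such a $P$ would be asymptotic, hence equal to $L$.) Your count therefore needs the statement that $\tilde c_3$ vanishes at such a point to order at least the number of lines of $\XXd$ through it, and this is a substantive lemma, not something ``one checks'': what comes for free is only order $\geq 2$, since $Q$ and $W$ both vanish there and every term of $\tilde c_3$ is quadratic in the pair $(Q,W)$, and this is insufficient as soon as three or more lines concur at one point of $L$. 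This concurrency bookkeeping is exactly the delicate part of Segre-type counts --- it is what the explicit local analysis in \cite{RS}, \cite{ramsschuett} is for in degrees $4$ and $5$ --- and until it is supplied (or concurrent lines are excluded or bounded by a separate argument), your proof of the bound $8d-14$ is incomplete.
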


Given a (Weil) divisor $Z=\sum_j \alpha_j C_j$ on $\XXd$ and a plane $\Pi \subset \PP^{3}(\KK)$ we
introduce the following notation:
$$
(Z)_{\Pi} := \sum_{C_j \subset \Pi} \alpha_j C_j \, \mbox{ and }  (Z)^{\Pi} := Z - (Z)_{\Pi}.
$$
For the proof of Thm.~\ref{thm-bound-char-p} we need the following observation.

\begin{obs} \label{obs-degree}
Let $L_1$, $\ldots$, $L_k \subset \PP^3$ be coplanar lines with $k\leq d$ and let  $\Pi = \mbox{span}(L_1, L_2)$ be the plane they span.
Moreover, assume that $\mbox{supp}(Z)$ contains none of the lines $L_1$, $\ldots$, $L_k$.
Then, the following inequality holds
$$
\deg(Z) \geq Z\cdot(L_1 + \ldots + L_k) - (k-1) \deg(Z_{\Pi})
$$
\end{obs}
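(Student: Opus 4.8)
The plan is to deduce the inequality from B\'ezout's theorem in the plane $\Pi$, after splitting $Z$ into the part supported on $\Pi$ and the rest. Throughout I write $H = \mathcal{O}_{\XXd}(1)$, so that $\deg(Z) = Z\cdot H$, and I use the decomposition $Z = (Z)_{\Pi} + (Z)^{\Pi}$ together with $\deg(Z) = \deg((Z)_{\Pi}) + \deg((Z)^{\Pi})$. Since the intersection numbers $Z\cdot L_i$ are computed on $\XXd$, the lines $L_i$ are understood to lie on $\XXd$; the plane section $\Pi\cap\XXd$ is then an effective divisor in $|H|$ of degree $d$. As $L_1,\dots,L_k$ are distinct lines in $\Pi$ and $k\le d$, each appears with coefficient at least one, so $\Pi\cap\XXd = L_1+\dots+L_k+R$ for some effective $R\ge 0$. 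After rearranging, the assertion is equivalent to $Z\cdot(L_1+\dots+L_k)\le \deg(Z)+(k-1)\deg((Z)_{\Pi})$, and I would bound the two summands $(Z)^{\Pi}\cdot(\sum_i L_i)$ and $(Z)_{\Pi}\cdot(\sum_i L_i)$ separately.

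For the first summand, note that $(Z)^{\Pi}$ has no component contained in $\Pi$, hence none in common with $R$. Thus the proper intersection number $(Z)^{\Pi}\cdot R$ is non-negative, and monotonicity of intersection with effective divisors gives $(Z)^{\Pi}\cdot(L_1+\dots+L_k) = (Z)^{\Pi}\cdot(\Pi\cap\XXd) - (Z)^{\Pi}\cdot R \le (Z)^{\Pi}\cdot H = \deg((Z)^{\Pi})$.

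For the second summand I would show that $(Z)_{\Pi}\cdot L_i = \deg((Z)_{\Pi})$ for every $i$, i.e.\ that the intersection number computed on the surface agrees with the one given by B\'ezout's theorem in the plane. Each component $C$ of $(Z)_{\Pi}$ lies in $\Pi$ and, since $\operatorname{supp}(Z)$ contains none of the $L_i$, differs from every $L_i$; hence $C$ and $L_i$ share no component, and it suffices to compare local multiplicities at a point $P\in C\cap L_i$. In local coordinates $x_1,x_2,x_3$ at $P$ with $\Pi=\{x_3=0\}$ and $\XXd=\{F=0\}$ smooth, the reduced curve $C$ is cut out in $\mathcal{O}_{\XXd,P}$ by the radical ideal $(x_3,c)$, where $c$ is the square-free planar equation of $C$; since $C\subset\XXd$ forces $c\mid F(x_1,x_2,0)$, one checks $F\in(x_3,c)$, whence $\dim_{\KK}\mathcal{O}_{\XXd,P}/(x_3,c,\ell) = \dim_{\KK}\KK[[x_1,x_2]]/(c,\ell)$, the right-hand side being exactly the planar intersection multiplicity of $C$ and $L_i=\{\ell=0\}$ at $P$. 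Summing over $P$ and over the components of $(Z)_{\Pi}$ and invoking B\'ezout's theorem (with $\deg L_i=1$) yields $(Z)_{\Pi}\cdot L_i = \deg((Z)_{\Pi})$, and therefore $(Z)_{\Pi}\cdot(L_1+\dots+L_k) = k\,\deg((Z)_{\Pi})$.

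Combining the two bounds gives $Z\cdot(L_1+\dots+L_k) \le k\,\deg((Z)_{\Pi}) + \deg((Z)^{\Pi}) = \deg(Z) + (k-1)\deg((Z)_{\Pi})$, which is the claim. I expect the third step to be the main obstacle: one might fear that a plane tangent to $\XXd$ along some $L_i$ (as happens for lines of the second kind, where the plane section $\Pi\cap\XXd$ is non-reduced) inflates the surface intersection numbers beyond the planar B\'ezout value. The local computation above is precisely what rules this out — non-reducedness of $\Pi\cap\XXd$ along a line affects only the multiplicity of that line in the section, not the cross terms $C\cdot L_i$ with $C\ne L_i$, because the ideals involved are radical — and this is the only point where the smoothness of $\XXd$ and the hypothesis $\operatorname{supp}(Z)\cap\{L_i\}=\emptyset$ are genuinely used.
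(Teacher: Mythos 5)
Your proof is correct and takes essentially the same route as the paper's: the same decomposition $Z=(Z)_{\Pi}+(Z)^{\Pi}$, the equality $(Z)_{\Pi}\cdot L_j=\deg((Z)_{\Pi})$ from planar B\'ezout, and the estimate $(Z)^{\Pi}\cdot(L_1+\cdots+L_k)\le\deg((Z)^{\Pi})$ --- the paper simply declares these two facts ``obvious'', while you supply the local verification. Only your closing aside is inaccurate: under the paper's hypotheses a plane section of a smooth surface can never be non-reduced (a multiple component would force the Gauss map to be constant along it, contradicting Zak's theorem as invoked in the proof of Lemma~\ref{lemma-one}), and lines of the second kind are not characterized by non-reduced plane sections; but this remark plays no role in your argument.
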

\begin{proof}
Obviously, we have the equalities $\deg(Z_{\Pi}) =  Z_{\Pi}\cdot L_j$ for $j=1,\ldots,k$,
and the inequality $\deg(Z^{\Pi}) \geq Z^{\Pi}.(L_1+ \ldots + L_k)$. Therefore, we obtain
\be
   \deg(Z) & =    & \deg(Z^{\Pi})+\deg(Z_{\Pi}) \\
           & \geq & Z^{\Pi}.(L_1+\ldots+L_k) +  Z_{\Pi}.(L_1+\ldots+L_k) - (k-1) \deg(Z_{\Pi}) \, .
\ee
Since $Z=Z^{\Pi}+Z_{\Pi}$, the claim follows.
\end{proof}

Now we put:
$$
{\mathcal Z} := \flec(\XXd) - \sum_{L_j \subset \XXd} L_j
$$
Moreover, we assume $h \leq d$ to be a positive integer.

\medskip
   Our aim in this section is to prove:

\begin{mainlemma}\label{mainlemma}
\be
\deg({\mathcal Z}) \geq 6(d-3) \, .
\ee
\end{mainlemma}

   The Main Lemma then immediately implies Theorem~\ref{thm-bound-char-p} upon
   using the equality $\deg{\mathcal Z} = \deg\flec(\XXd) - \ell(\XXd)$.

\begin{definition}\mbox{}
\begin{enumerate}
\item[a)]
   We put $\ell_1(\XXd)$ (resp.  $\ell_2(\XXd)$) to denote the number of lines of that come up with multiplicity one (resp. higher than one) in $\flec(\XXd)$.
\item[b)]
   We call a line of multiplicity one in $\flec(\XXd)$  {\bf  reduced}.
\item[c)]
   We call a plane $\Pi$ {\bf $k$-spanned } if it contains $k$ reduced lines (so each $3$-spanned plane is $2$-spanned etc.).
\end{enumerate}
\end{definition}

Obviously, we have
\begin{equation} \label{eq-Zl2}
\deg({\mathcal Z}) \geq  \ell_2(\XXd) \, .
\end{equation}

As a consequence of Obs.~\ref{obs-degree} we obtain the following bound on $\deg(\mathcal Z)$.

\begin{obs} \label{obs-zzz}
Let $L_1$, $\ldots$, $L_k \subset \PP^3$ be coplanar reduced lines with $k\leq d$ and let  $\Pi = \mbox{span}(L_1, L_2)$ be the plane they span.
Then, the following inequality holds
\begin{equation} \label{eq-ineqZ}
\deg({\mathcal Z}) \geq 4k(d-3) - (k-1) \deg({\mathcal Z}_{\Pi})
\end{equation}
\end{obs}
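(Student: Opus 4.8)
The plan is to derive this directly from Observation \ref{obs-degree} by applying it to the divisor $Z = \mathcal{Z}$ together with the $k$ coplanar reduced lines $L_1, \ldots, L_k$. The key point is that these lines are \emph{reduced}, i.e., they appear in $\flec(\XXd)$ with multiplicity exactly one, which is precisely what allows me to remove them when forming $\mathcal{Z} = \flec(\XXd) - \sum_{L_j \subset \XXd} L_j$ and still obtain a divisor whose support avoids them.

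First I would verify that the hypotheses of Observation \ref{obs-degree} are met. The lines $L_1, \ldots, L_k$ are coplanar with $k \leq d$ by assumption, and $\Pi = \operatorname{span}(L_1, L_2)$ is the plane they span. The crucial hypothesis is that $\operatorname{supp}(\mathcal{Z})$ contains none of the $L_j$: since each $L_j$ is reduced, it occurs in $\flec(\XXd)$ with coefficient one, and subtracting $\sum_{L_j \subset \XXd} L_j$ cancels it entirely, so $L_j \not\subset \operatorname{supp}(\mathcal{Z})$. Thus Observation \ref{obs-degree} applies and yields
\begin{equation*}
\deg(\mathcal{Z}) \geq \mathcal{Z} \cdot (L_1 + \cdots + L_k) - (k-1)\deg(\mathcal{Z}_\Pi).
\end{equation*}

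Next I would estimate the intersection term $\mathcal{Z} \cdot (L_1 + \cdots + L_k)$ from below. Since $\flec(\XXd) \in |\mathcal{O}_{\XXd}(11d-24)|$ by Proposition \ref{thm-flecnodal}, and each line $L \subset \XXd$ satisfies $\mathcal{O}_{\XXd}(1) \cdot L = 1$, we have $\flec(\XXd) \cdot L_j = 11d - 24$. On the other hand, $\left(\sum_{L \subset \XXd} L\right) \cdot L_j$ counts (with self-intersection) the contribution of all lines meeting $L_j$; the self-intersection $L_j^2 = 2 - d$ on the surface, together with the bound from Claim \ref{claim-first-kind} that a reduced (hence first-kind) line is met by at most $8d - 14$ other lines, gives $\left(\sum_L L\right) \cdot L_j \leq (8d - 14) + (2 - d) = 7d - 12$. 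Hence $\mathcal{Z} \cdot L_j = \flec(\XXd) \cdot L_j - \left(\sum_L L\right) \cdot L_j \geq (11d - 24) - (7d - 12) = 4(d - 3)$, and summing over $j = 1, \ldots, k$ produces the $4k(d-3)$ term. Substituting into the displayed inequality gives \eqref{eq-ineqZ}.

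The step I expect to be the main obstacle is the lower bound $\mathcal{Z} \cdot L_j \geq 4(d-3)$, since it is the only place that secretly invokes Claim \ref{claim-first-kind} and the reducedness of $L_j$ in an essential, arithmetic way; I must be careful that the count of other lines meeting $L_j$ is an upper bound (so that subtracting it yields a lower bound for $\mathcal{Z} \cdot L_j$) and that the self-intersection $L_j^2 = 2 - d$ is correctly incorporated. Everything else is a formal application of Observation \ref{obs-degree} and linearity of intersection numbers.
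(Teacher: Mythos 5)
Your proposal is correct and takes essentially the same route as the paper: both apply Observation~\ref{obs-degree} to ${\mathcal Z}$ (using reducedness to guarantee $L_j \not\subset \operatorname{supp}({\mathcal Z})$) and then bound ${\mathcal Z}\cdot L_j \geq 4(d-3)$ from $\flec(\XXd)\cdot L_j = 11d-24$, $L_j^2 = -(d-2)$, the fact that reduced lines are of the first kind (via Prop.~\ref{thm-flecnodal}), and Claim~\ref{claim-first-kind}. The only difference is bookkeeping -- you absorb the self-intersection term into $\bigl(\sum_L L\bigr)\cdot L_j \leq 7d-12$, while the paper absorbs it into $(\flec(\XXd)-L_q)\cdot L_q = 12d-26$ -- which is immaterial.
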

\begin{proof} The lines $L_1$, $\ldots$, $L_k$ are  reduced, so they are no components of the support of ${\mathcal Z}$ and we can apply  Obs.~~\ref{obs-degree}  to obtain
$$
\deg({\mathcal Z}) \geq {\mathcal Z}.(L_1 + \ldots + L_k) - (k-1) \deg({\mathcal Z}_{\Pi})
$$
Recall that $L^2=-(d-2)$ for each  line $L \subset \XXd$. Thus for $q=1, \ldots, k$ we have
$$
(\flec(\XXd) - L_q).L_q = (11 d - 24) + (d - 2) =  12 d - 26 \, .
$$
By Prop.~~\ref{thm-flecnodal} the lines $L_1$, $\ldots$, $L_k$ are of the first kind.
Therefore, by Claim~\ref{claim-first-kind}, for $q = 1, \ldots, k$, we have
$$
{\mathcal Z}.L_q =  (\flec(\XXd) - L_q - \sum_{L_j \subset \XXd, j \neq q} L_j).L_q \geq (12 d - 26) - (8d - 14) = 4(d-3)  \,
$$
and the claim \eqref{eq-ineqZ} follows.
\end{proof}

\begin{obs} \label{obs-zztop}
Let $L_1$, $\ldots$, $L_k \subset \PP^3$ be coplanar reduced lines with $2 \leq k \leq d$ and let  $\Pi = \mbox{span}(L_1, L_2)$ be the plane they span. Moreover, let
$$
h \leq 4k \, .
$$
Then the following implication holds:
\begin{equation} \label{eq-ineqZZ}
\mbox{ if }  \deg({\mathcal Z}_{\Pi}) \leq \frac{(4k-h)(d-3)}{k-1} \quad \mbox{then} \quad  \deg({\mathcal Z}) \geq h(d-3)
\end{equation}
\end{obs}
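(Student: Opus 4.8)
The plan is to derive the implication directly from the lower bound established in Observation~\ref{obs-zzz}; essentially no new geometric input is needed, only an elementary rearrangement of that inequality. First I would observe that the hypotheses here are precisely those of Observation~\ref{obs-zzz}: the lines $L_1, \ldots, L_k$ are coplanar and reduced with $k \leq d$, and $\Pi = \operatorname{span}(L_1, L_2)$. Hence Observation~\ref{obs-zzz} applies verbatim and gives
$$
\deg({\mathcal Z}) \geq 4k(d-3) - (k-1)\deg({\mathcal Z}_{\Pi}).
$$
This is the only substantial inequality used; everything that follows is bookkeeping.

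Next I would feed in the hypothesis $\deg({\mathcal Z}_{\Pi}) \leq \tfrac{(4k-h)(d-3)}{k-1}$. At this point it is important that $k \geq 2$, so that $k-1 \geq 1 > 0$ and multiplication by $(k-1)$ preserves the direction of the inequality, producing
$$
(k-1)\deg({\mathcal Z}_{\Pi}) \leq (4k-h)(d-3).
$$
The standing assumption $h \leq 4k$ guarantees that the right-hand side is nonnegative; this is a consistency check (a negative bound on the nonnegative quantity $\deg({\mathcal Z}_{\Pi})$ would render the hypothesis vacuous), although it is not formally required for the derivation.

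Finally I would combine the two displays. From the last inequality one gets
$$
4k(d-3) - (k-1)\deg({\mathcal Z}_{\Pi}) \geq 4k(d-3) - (4k-h)(d-3) = h(d-3),
$$
and chaining this with the bound of Observation~\ref{obs-zzz} yields $\deg({\mathcal Z}) \geq h(d-3)$, which is exactly the asserted conclusion. I do not expect any genuine obstacle here: the whole geometric content has already been absorbed into Observation~\ref{obs-zzz}, and the present statement merely records the convenient reformulation ``a small value of $\deg({\mathcal Z}_{\Pi})$ forces a large value of $\deg({\mathcal Z})$'' that will be invoked repeatedly in the case analysis toward the Main Lemma. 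The only points deserving minor care are the sign of $k-1$ (secured by $k \geq 2$) and the faithful tracking of the common factor $(d-3)$ throughout.
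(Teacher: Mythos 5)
Your proof is correct and is exactly the paper's argument: the paper's proof reads ``Insert the assumption \eqref{eq-ineqZZ} into \eqref{eq-ineqZ},'' i.e.\ substitute the hypothesis on $\deg({\mathcal Z}_{\Pi})$ into the bound of Observation~\ref{obs-zzz}, which is precisely what you do. Your additional remarks on the sign of $k-1$ and the nonnegativity check are fine but not needed beyond what the paper leaves implicit.
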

\begin{proof} Insert the assumption \eqref{eq-ineqZZ} into \eqref{eq-ineqZ}.
\end{proof}

\begin{lemm} \label{lemm-threeplanes}
Let $L_1$ be a reduced line.
\begin{enumerate}
\item[a)]
If there exist three $2$-spanned planes $\Pi_1$, $\Pi_2$, $\Pi_3$ that meet along $L_1$, then
$$
\deg({\mathcal Z}) \geq 6(d-3)
$$
\item[b)]
If there exist two $3$-spanned planes $\Pi_1$, $\Pi_2$,  that meet along $L_1$, then
$$
\deg({\mathcal Z}) \geq 6(d-3)
$$
\end{enumerate}
\end{lemm}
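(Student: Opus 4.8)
The plan is to combine the quantitative implication in Observation~\ref{obs-zztop} with the elementary observation that, because the planes in question meet only along the reduced line $L_1$, the divisors they cut out of $\mathcal{Z}$ share no components and their degrees may simply be added.

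I would first isolate the geometric input. The planes are distinct and all contain $L_1$, so any two of them intersect in exactly the line $L_1$; hence a component of $\mathcal{Z}$ lying in two of these planes would be contained in $L_1$ and would therefore equal $L_1$. But $L_1$ is reduced, so it is subtracted off completely in the definition $\mathcal{Z} = \flec(\XXd) - \sum_{L_j \subset \XXd} L_j$ and is not a component of $\operatorname{supp}(\mathcal{Z})$. Consequently the effective divisors $(\mathcal{Z})_{\Pi_i}$ have pairwise disjoint supports; being sub-divisors of $\mathcal{Z}$, they satisfy
\[
\deg(\mathcal{Z}) \;\geq\; \sum_i \deg\bigl((\mathcal{Z})_{\Pi_i}\bigr).
\]

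Next I apply Observation~\ref{obs-zztop} with $h = 6$. In case (a) each $\Pi_i$ is $2$-spanned, so I take $k = 2$, for which the threshold $\tfrac{(4k-h)(d-3)}{k-1}$ equals $2(d-3)$; in case (b) each $\Pi_i$ is $3$-spanned, so I take $k = 3$, for which the threshold equals $3(d-3)$. The argument then splits into two cases. If some $\Pi_i$ satisfies $\deg((\mathcal{Z})_{\Pi_i}) \leq 2(d-3)$ (case (a)), respectively $\leq 3(d-3)$ (case (b)), then Observation~\ref{obs-zztop} immediately gives $\deg(\mathcal{Z}) \geq 6(d-3)$. Otherwise every plane strictly exceeds the threshold, and the displayed inequality yields $\deg(\mathcal{Z}) > 3 \cdot 2(d-3) = 6(d-3)$ in case (a) and $\deg(\mathcal{Z}) > 2 \cdot 3(d-3) = 6(d-3)$ in case (b). Either way the asserted bound holds.

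The only point demanding care---though it is not deep---is the disjointness of the supports, since without it the degrees $\deg((\mathcal{Z})_{\Pi_i})$ could not be summed; this is precisely where the hypothesis that the planes meet along a \emph{reduced} line enters. I do not anticipate any serious obstacle beyond making this bookkeeping precise and checking that $2 \leq k \leq d$ and $h \leq 4k$ hold (which they do, since $d \geq 3$) so that Observation~\ref{obs-zztop} applies in each case.
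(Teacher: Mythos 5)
Your proof is correct and takes essentially the same route as the paper's: the same dichotomy via Observation~\ref{obs-zztop} with $h=6$ (either some plane is below the threshold, which finishes immediately, or all planes exceed it and the degrees $\deg(({\mathcal Z})_{\Pi_i})$ are summed). The only difference is that you explicitly justify why those degrees can be added---the planes pairwise meet only in the reduced line $L_1$, which is not a component of ${\mathcal Z}$---a point the paper's proof uses without comment.
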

\begin{proof} a) If   $\deg({\mathcal Z}_{\Pi_j}) \leq \frac{(4 \cdot 2 -6 )(d-3)}{2-1}$ for one of the planes  $\Pi_1$, $\Pi_2$, $\Pi_3$,
then $\deg({\mathcal Z}) \geq 6(d-3)$ by Obs.~\ref{obs-zztop} and the proof is complete. \\
Assume that
$$
\deg({\mathcal Z}_{\Pi_j}) \geq \frac{(4 \cdot 2 -6 )(d-3)}{2-1} \mbox{ for each of the planes } \Pi_1, \Pi_2, \Pi_3,
$$
then 
$$
\deg({\mathcal Z}) \geq  \deg({\mathcal Z}_{\Pi_1}) + \deg({\mathcal Z}_{\Pi_2})  + \deg({\mathcal Z}_{\Pi_3}) \geq  6(d-3)
$$
and the proof is complete.

b) The claim follows as in part a).
\end{proof}
In the sequel we will also need the following bound on $\deg({\mathcal Z})$.
\begin{lemm} \label{lemm-one-reduced-line}
Let $L_1$ be a reduced line. Assume that $L_1$ is met by at most $q$ other reduced lines.
Then
$$
\deg({\mathcal Z}) \geq (6d - 13) - \frac{q}{2} \, .
$$
\end{lemm}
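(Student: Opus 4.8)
The plan is to bound $\deg({\mathcal Z})$ from below in two independent ways and then average the two estimates so as to cancel the \emph{a priori} unknown number of second-kind lines through $L_1$. Throughout, write $r$ for the number of reduced lines other than $L_1$ that meet $L_1$, and $s$ for the number of non-reduced lines that meet $L_1$; by hypothesis $r\le q$. Since $L_1$ is reduced it appears in $\flec(\XXd)$ with multiplicity one, hence $L_1\not\subset\operatorname{supp}({\mathcal Z})$, and by Prop.~\ref{thm-flecnodal} it is of the first kind.

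First I would compute ${\mathcal Z}\cdot L_1$ exactly, exactly as in the proof of Obs.~\ref{obs-zzz}. Using $\flec(\XXd)\cdot L_1=11d-24$, the self-intersection $L_1^2=-(d-2)$, and $L_j\cdot L_1=1$ for each of the $r+s$ lines $L_j\ne L_1$ meeting $L_1$, I obtain
\[
{\mathcal Z}\cdot L_1=(11d-24)+(d-2)-r-s=12d-26-r-s.
\]
Applying the $k=1$ instance of Obs.~\ref{obs-degree} (equivalently, splitting ${\mathcal Z}={\mathcal Z}^{\Pi}+{\mathcal Z}_{\Pi}$ along any plane $\Pi$ through $L_1$, where the correction term $(k-1)\deg({\mathcal Z}_\Pi)$ vanishes) gives the first estimate
\[
\deg({\mathcal Z})\ge {\mathcal Z}\cdot L_1\ge 12d-26-q-s.
\]

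The second, and crucial, observation is that each of the $s$ non-reduced lines meeting $L_1$ is an honest component of ${\mathcal Z}$. Indeed, such a line has multiplicity $\ge 2$ in $\flec(\XXd)$ but is subtracted only once in forming ${\mathcal Z}=\flec(\XXd)-\sum_{L_j\subset\XXd}L_j$, so it survives in ${\mathcal Z}$ with multiplicity at least one. As these are $s$ distinct lines, each of degree one, the effectivity of ${\mathcal Z}$ yields $\deg({\mathcal Z})\ge s$. Substituting $s\le\deg({\mathcal Z})$ into the first estimate gives $\deg({\mathcal Z})\ge 12d-26-q-\deg({\mathcal Z})$, that is $2\deg({\mathcal Z})\ge 12d-26-q$, which is the asserted inequality.

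I expect the main obstacle to lie precisely in isolating the second estimate: the second-kind lines through $L_1$ enter ${\mathcal Z}\cdot L_1$ with a negative sign and could in principle ruin the first bound, and the point is to recognise that these very lines reappear as positive-degree components of ${\mathcal Z}$, so that combining the two inequalities eliminates $s$ entirely. The remaining steps (the intersection computation and the degenerate case of Obs.~\ref{obs-degree}) are routine; note in particular that, unlike Obs.~\ref{obs-zzz}, this argument does not invoke Claim~\ref{claim-first-kind}, since the count of lines meeting $L_1$ is carried exactly rather than bounded.
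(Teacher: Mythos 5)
Your proof is correct, and it reaches the bound by a genuinely different, though closely related, route. Both arguments start from the same computation, ${\mathcal Z}\cdot L_1=(11d-24)+(d-2)-r-s=12d-26-r-s$, combined with $\deg({\mathcal Z})\geq{\mathcal Z}\cdot L_1$, and both ultimately rest on the fact that a non-reduced line appears with multiplicity at least two in $\flec(\XXd)$; the difference lies in how that fact is spent. The paper spends it \emph{locally along $L_1$}: each of the $s$ non-reduced lines meeting $L_1$ contributes at least $2$ to the fixed budget $(\flec(\XXd)-L_1)\cdot L_1=12d-26$, so $r+2s\leq 12d-26$, hence $r+s\leq\frac{q}{2}+6d-13$, and a single intersection estimate finishes the proof. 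You spend it \emph{globally}: the $s$ lines survive as components of the effective divisor ${\mathcal Z}$, giving $\deg({\mathcal Z})\geq s$ (a localized form of inequality \eqref{eq-Zl2}), and $s$ is then eliminated by adding this to the first estimate. The two mechanisms yield exactly the same constant, so neither is sharper; the paper's version is more economical, with all bookkeeping carried out on $L_1$, while yours makes more transparent why the unknown number of non-reduced lines through $L_1$ can never hurt: whatever they subtract from ${\mathcal Z}\cdot L_1$ they repay in $\deg({\mathcal Z})$. The auxiliary points in your write-up (effectivity of ${\mathcal Z}$, the fact that $L_1\not\subset\operatorname{supp}({\mathcal Z})$, the degenerate $k=1$ case of Obs.~\ref{obs-degree}) are all sound, and your closing remark is accurate as a comparison with Obs.~\ref{obs-zzz}: neither your argument nor the paper's proof of this lemma invokes Claim~\ref{claim-first-kind}.
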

\begin{proof}
Obviously, we have $(\flec(\XXd)-L_1)\cdot L_1 = 12d-26$, so this gives the following bound on the number of other lines on $\XXd$ that meet $L_1$:
$$
\Big((\sum_{L_j \subset \XXd} L_j) - L_1\Big)\cdot L_1 \leq q + \frac{1}{2} (12d-26 - q) = \frac q2 + 6d -13
\,,
$$
because each non-reduced line comes with multiplicity at least two in the flecnodal divisor.
We have
$$
{\mathcal Z} = \flec(\XXd) - \sum_{L_j \subset \XXd} L_j
\,.
$$
Thus
\be
  \deg({\mathcal Z}) \geq {\mathcal Z}.L_1  &=& \flec(\XXd).L_1 - \Big(\sum_{L_j \subset \XXd} L_j - L_1\Big)\cdot L_1 - L_1^2 \\
  &\geq& (11d - 24) - (q/2 + 6d - 13) - (2-d)
\ee
and the claim follows.
\end{proof}

\begin{lemm} \label{lemm-one-line}
Let $L_1$ be a reduced line. Assume that $L_1$ is met by at most  $q_1$ other reduced lines and $q_2$ non-reduced lines. Then
$$
\deg({\mathcal Z}) \geq (12d - 26) -  (q_1 + q_2) \, .
$$
\end{lemm}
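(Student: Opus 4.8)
The plan is to deduce the inequality from a single intersection-number computation on $\XXd$, in the same spirit as the proof of Lemma~\ref{lemm-one-reduced-line}, but exploiting the two hypotheses directly instead of re-deriving the number of non-reduced lines through a multiplicity estimate. First I would record the structural point that makes the argument work: since $L_1$ is reduced, it enters $\flec(\XXd)$ with multiplicity one, and it is of course one of the lines subtracted in ${\mathcal Z} = \flec(\XXd) - \sum_{L_j \subset \XXd} L_j$. Hence the coefficient of $L_1$ in ${\mathcal Z}$ is zero, so $L_1$ is not a component of the effective divisor ${\mathcal Z}$. This licenses the inequality $\deg({\mathcal Z}) \geq {\mathcal Z} \cdot L_1$, because every component $C$ of ${\mathcal Z}$ meets the line $L_1$ in at most $\deg(C)$ points (two curves of degrees $a,b$ in $\PP^3$ meet in at most $ab$ points), so summing over components gives ${\mathcal Z}\cdot L_1 \le \deg({\mathcal Z})$.

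Next I would expand ${\mathcal Z}\cdot L_1$ term by term. From $\flec(\XXd)\in|{\mathcal O}_{\XXd}(11d-24)|$ one gets $\flec(\XXd)\cdot L_1 = 11d-24$, while
$$
\big(\textstyle\sum_{L_j\subset\XXd} L_j\big)\cdot L_1 = L_1^2 + \big(\textstyle\sum_{L_j\neq L_1} L_j\big)\cdot L_1 .
$$
Here I would invoke the two remaining ingredients: the self-intersection formula $L_1^2 = -(d-2)$ recalled earlier, and the fact that any line distinct from $L_1$ meets it in at most one point, so that $\big(\sum_{L_j\neq L_1} L_j\big)\cdot L_1$ simply counts the lines on $\XXd$ meeting $L_1$. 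By hypothesis this count is at most $q_1+q_2$.

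Combining these gives
$$
{\mathcal Z}\cdot L_1 = (11d-24) - \Big(-(d-2) + \big(\textstyle\sum_{L_j\neq L_1} L_j\big)\cdot L_1\Big) \geq (12d-26) - (q_1+q_2),
$$
and the asserted bound $\deg({\mathcal Z})\geq (12d-26)-(q_1+q_2)$ follows from $\deg({\mathcal Z})\geq {\mathcal Z}\cdot L_1$.

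I do not anticipate a genuine obstacle, and the lemma is in fact easier than Lemma~\ref{lemm-one-reduced-line}, since the number of non-reduced lines through $L_1$ is supplied by the hypothesis and no multiplicity-$\geq 2$ estimate is required. The only points deserving a word of care are that ${\mathcal Z}$ is effective — which holds because every line on $\XXd$ lies in $\operatorname{supp}(\flec(\XXd))$ by Prop.~\ref{thm-flecnodal} and hence is subtracted with coefficient no larger than its multiplicity — and that the non-line components of $\flec(\XXd)$ meeting $L_1$ may simply be discarded, as they contribute non-negatively and only sharpen the inequality.
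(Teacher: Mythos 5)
Your proof is correct and is essentially the paper's own argument: both bound $\deg({\mathcal Z})$ below by ${\mathcal Z}\cdot L_1$ (valid since the reduced line $L_1$ has coefficient zero in the effective divisor ${\mathcal Z}$) and then compute ${\mathcal Z}\cdot L_1$ from $\flec(\XXd)\cdot L_1 = 11d-24$, $L_1^2=-(d-2)$, and the hypothesis that at most $q_1+q_2$ lines meet $L_1$. Your added justifications (effectivity of ${\mathcal Z}$, the Bezout-type bound $C\cdot L_1\le\deg C$) are details the paper leaves implicit, and they are sound.
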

\begin{proof} Since we have
$$
\Big((\sum_{L_j \subset \XXd} L_j) - L_1\Big)\cdot L_1 = q_1 + q_2  \quad\mbox{and}\quad  {\mathcal Z} = \flec(\XXd) - \sum_{L_j \subset \XXd} L_j
\,,
$$
we get
\be
   \deg({\mathcal Z}) \geq {\mathcal Z}.L_1  &=& \flec(\XXd).L_1 -  (\sum_{L_j \subset \XXd} L_j - L_1).L_1 - L_1^2 \\
   &=& (11d - 24) - (q_1 + q_2) + (d-2)
   \,.
\ee

\end{proof}

We can now prove the Main Lemma \ref{mainlemma}.

\begin{proof}[Proof of Lemma \ref{mainlemma}.]
If $\XXd$ contains at least $6(d-3)$ lines that are not reduced, then the claim follows from  the inequality \eqref{eq-Zl2}. Therefore for the rest of the proof we assume that
the following inequality holds:
\begin{equation} \label{eq-many-non-reduced}
l_2(\XXd) < 6(d-3)
\end{equation}

\noindent
\emph{Step 1:} Assume there are no reduced lines on $\XXd$. Then 
$$
\deg({\mathcal Z}) \geq \frac12 \deg\flec(\XXd) = \frac{1}{2} d(11d-24) \geq 6(d-3) \, .
$$

\noindent
\emph{Step 2:} Suppose there exists a reduced line that is met by at most $10$ reduced lines. Then Lemma~\ref{lemm-one-reduced-line}
yields Lemma \ref{mainlemma}. Thus we can assume the following for the remainder of the proof.

\begin{state}{Assumption A}
   There exists a reduced line on $\XXd$ and each reduced line on $\XXd$ is met by at least 11 other reduced lines.
\end{state}

\noindent
\emph{Step 3:} If there exists a reduced line on $\XXd$ that is contained either in three $2$-spanned planes or in two $3$-spanned planes,
then Lemma~\ref{lemm-threeplanes}
yields the claim.  Thus we can assume that each reduced line $L_1 \subset \XXd$ forms part of one of the two configurations:
\begin{enumerate}
\item $L_1$ is contained in exactly one plane that is spanned by reduced lines and, by Assumption A, the plane in question is $k$-spanned with $k \geq 12$,
\item $L_1$ is contained in exactly two planes that are spanned by reduced lines and one of them fails to be $3$-spanned (it is only $2$-spanned).
\end{enumerate}

\noindent
\emph{Step 4:} Suppose there exists a reduced line $L_1$ that is contained in exactly one $k$-spanned plane $\Pi$, i.e.
all reduced lines that meet $L_1$  are contained in the plane $\Pi$. Let $L_2$, $\ldots$, $L_k$, with $k \geq 12$, be the reduced lines that meet
the line $L_1$. \\
Each of the lines $L_2$, $\ldots$, $L_k$ is contained in at most two $2$-spanned planes, (and if it is contained in a $2$-spanned plane $\neq \Pi$, then the other plane is
 $2$-spanned but not $3$-spanned) by Lemma~\ref{lemm-threeplanes}. Thus each of the lines $L_2$, $\ldots$, $L_k$  is met by at most one reduced line that is not contained in $\Pi$.

At most $(d-k)$ non-reduced lines are contained in $\Pi$ and we have at most $6(d-3)-1$ non-reduced lines on $\XXd$.
Our surface is smooth, so a line that is not contained in $\Pi$ meets at most one of the lines $L_1$, $\ldots$, $L_k$ (otherwise it would be in $\Pi$,
or it would meet two lines $L_{j_1}, L_{j_2}$, where $j_1 \neq j_2$,  in the same point and the latter would be a singularity of $\XXd$ because the tangent space of $\XXd$ would be too large). \\
Thus one of the lines $L_1$, $\ldots$, $L_k$
is met by at most
$$
(d-k) + \frac{1}{k}(6(d-3)-1) < (d-k) +  \frac{6}{12}(d-3)
$$
non-reduced lines.
Lemma~\ref{lemm-one-line} yields (with $q_1=k-1$ and $q_2 < (d-k) +  \frac{1}{2}(d-3)$)
the inequality
\be
   \deg({\mathcal Z})
   &\ge& (12d-26) - ((k-1)+q_2) \\
   &\geq& (12 d - 26) - (k + (d-k) +  \frac{1}{2}(d-3)) \geq  (12 d - 26) - (d+\frac{1}{2}(d-3))
\ee
which implies Lemma \ref{mainlemma}. Thus we can assume the following.

\begin{state}{Assumption B}
   Each reduced line is contained in exactly two $2$-spanned planes, one of which is
   not $3$-spanned (but only $2$-spanned).
\end{state}

\noindent
\emph{Step 5:} Let $L_1$ be a reduced line and let it be met by $k$ other reduced lines. By Assumption B the line $L_1$ is contained in two $2$-spanned planes: $\Pi_1$ and $\Pi_2$.
We can assume $\Pi_2 = \mbox{span}(L_1, L_{k+1})$ to be $2$-spanned, but not $3$-spanned. Then $\Pi_1 = \mbox{span}(L_1, \ldots, L_{k})$ is $k$-spanned but not $(k+1)$-spanned, where
$L_1$, $\ldots$, $L_{k+1}$ are assumed to be reduced lines. By Assumption A we have $k \geq 11$.

At most $(d-k)$ non-reduced lines are contained in $\Pi_1$ and we have at most $6(d-3)-1$ non-reduced lines on $\XXd$. Again, one of the lines
$L_1$, $\ldots$, $L_{k}$ is met by at most
$$
(d-k) + \frac{1}{k}(6(d-3)-1) \leq (d-k) +  \frac{6}{11}(d-3)
$$
non-reduced lines.  Moreover it is met by exactly $k$ reduced lines. As in Step 4 we obtain
$$
\deg({\mathcal Z}) \geq (12 d - 26) - (k + (d-k) +  \frac{6}{11}(d-3)) \geq  (12 d - 26) - (d+\frac{6}{11}(d-3))
$$
and the proof is complete.
\end{proof}

Finally,  we can give the proof of Theorem~\ref{thm-bound-char-p}.
\begin{proof}[Proof of Theorem~\ref{thm-bound-char-p}.]
 The claim  follows immediately  from the Main Lemma~\ref{mainlemma} and the equality $\deg{\mathcal Z} = \deg\flec(\XXd) - \ell(\XXd)$.
\end{proof}

\begin{examp} \label{example-fermat}
The Fermat surface
$$
x_0^d + x_1^d + x_2^d + x_3^d = 0
$$
contains $3d^2$ lines.
For $d \neq 4, 6, 8, 12, 20$ this is up to now the best 
 example of a
surface
 with many lines (see e.g. \cite{sarti}).
\end{examp}

\noindent
{\it Acknowledgement.}
We would like to acknowledge the big impact of Wolf Barth
on our research.
He introduced us to the circle of ideas surrounding rational
curves on surfaces while we were postdoctoral students in
Erlangen.
This note would have never been written without his generous
support and the inspiring ideas \cite{barth-flec}
 that he communicated to
us at that time.


\begin{thebibliography}{99}

\bibitem{barth-flec} Barth, W.P.: \emph{Personal communication}. Erlangen 1999.

\bibitem{benedetti-regularity}  Benedetti, B., Di Marca, M.; Varbaro, M.: \emph{Regularity of line configurations.} J. Pure Appl. Algebra {\bf 222}, 2596--2608 (2018).

\bibitem{sarti} Boissi\'ere, S., Sarti, A.: \emph{Counting lines on surfaces.} Ann. Sc. Norm. Super. Pisa, Cl. Sci. {\bf 6}, 39--52 (2007).


\bibitem{Clebsch}
Clebsch, A.:
\emph{Zur Theorie der algebraischen Fl\"achen},
Journal reine angew. Math. {\bf 58}, 93--108 (1861).





\bibitem{dis-2015}  Degtyarev, A. I., Itenberg, I.,  Sert\"oz, A. S.: \emph{Lines on quartic surfaces.}
Math.~Ann. {\bf  368}, 753--809 (2017).


\bibitem{eisenbud-harris-3264}  Eisenbud, D., Harris, J.,  \emph{3264 and All That:
A Second Course  in Algebraic Geometry.}  Cambridge University Press, 2016.

\bibitem{FultonInters} Fulton, W.:  \emph{Intersection theory}. Springer-Verlag, New York, 1984.



\bibitem{gonzalezalonso}
 Gonzal\'ez-Alonso V., Rams S.: \emph{Counting lines on quartic surfaces.} Taiwanese J. Math. {\bf 20}, 769--785 (2016).


\bibitem{hartshorne}
   Hartshorne, R.:
   \emph{Algebraic geometry}.
   Graduate Texts in Mathematics, No. 52. Springer, 1977.


\bibitem{kollar}   Koll\'ar, J.:
\emph{Szem\'eredi-Trotter-type theorems in dimension 3.} Adv. Math. {\bf 271} , 30--61 (2015).


\bibitem{mcrory-shifrin} McCrory, C., Shifrin, T.:
\emph{Cusps of the projective Gauss map.}
J. Differential Geom. {\bf  19}, 257--276 (1984).



\bibitem{miyaoka} Miyaoka, Y:
\emph{Counting Lines and Conics on a Surface.} Publ. RIMS, Kyoto Univ. 45, 919--923 (2009).




\bibitem{rs-advgeom} Rams, S., Sch\"utt, M.: \emph{On quartics with lines of the second kind.}  Adv. Geom. {\bf 14}, 735--756 (2014).

\bibitem{RS}
Rams, S., Sch\"utt, M.:
\emph{64 lines on smooth quartic surfaces},
Math.~Ann. {\bf  362}, 679--698 (2015).


\bibitem{RS112}
Rams, S., Sch\"utt, M.:
\emph{112 lines on smooth quartic surfaces (characteristic 3)}. The Quarterly Journal of Mathematics, 66(3), 941-951 (2015)

\bibitem{ramsschuett}  Rams, S., Sch\"utt, M.: \emph{Counting lines on surfaces, especially quintics.}, Ann. Sc. Norm. Super. Pisa Cl. Sci (to appear),
DOI Number: 10.2422/2036-2145.201804\_024.

\bibitem{salmon} Salmon, G.:
\emph{On the triple tangent planes to a surface of the third order.}
Cambridge
and Dublin Math. J.,
{\bf 4}, 252--260 (1849).


\bibitem{segre43} Segre, B.: \emph{The maximum number of lines lying on a quartic surface.} Oxf. Quart. J. {\bf 14}, 86--96 (1943).

\bibitem{serre}  Serre, J.-P.: \emph{Local algebra.} Springer Monographs in Mathematics. Springer-Verlag, Berlin, 2000.

\bibitem{shioda} Shimada, I., Shioda, T.: \emph{On a smooth quartic surface containing 56 lines which is isomorphic as a K3 surface to the Fermat quartic.} Manuscripta Math. {\bf 153}, 279--297 (2017).

\bibitem{veniani-phd}   Veniani, D. C.: \emph{The maximum number of lines lying on a K3 quartic surface}.
Math. Z.~{\bf 285}, 1141--1166 (2017).

\bibitem{voloch}
 Voloch, F.:
 \emph{Surfaces in $\PP^3$ over finite fields}.
 Topics in Algebraic and Noncommutative Geometry,
 Contemp.~Math.~{\bf 324} (2003), 219--226.

\bibitem{zak}  Zak, F. L.: \emph{Tangents and secants of algebraic varieties.} Translations of Mathematical Monographs, 127. American Mathematical Society, Providence, RI, 1993.







\end{thebibliography}
\end{document}